\theoremstyle{plain}
\newtheorem{lemma}{Lemma}[section]
\newtheorem{theorem}[lemma]{Theorem}
\newtheorem{proposition}[lemma]{Proposition}
\newtheorem{proposition-definition}[lemma]{Proposition-Definition}
\newtheorem{definition}[lemma]{Definition}
\newtheorem{theorem-definition}[lemma]{Theorem-Definition}
\newtheorem{remark}[lemma]{Remark}
\newtheorem{example}[lemma]{Example}
\renewcommand{\P}{\mathcal P}
\newcommand{\D}{\mathcal D}
\newcommand{\Z}{\mathbb Z}
\newcommand{\R}{\mathbb R}
\newcommand{\C}{\mathbb C}
\newcommand{\N}{\mathbb N}
\newcommand{\E}{\mathbb E}
\newcommand{\Luk}{\mathbf L}
\newcommand{\setzeroes}{x_1=\cdots=x_N=0}
\newcommand{\la}{\lambda}
\newcommand{\M}{\mathcal M}
\renewcommand{\d}{\textrm d}
\newcommand{\MM}{\mathbf M}
\newcommand{\vecx}{\vec{x}}
\newcommand{\zeroes}{\vec{x}=(0^N)}
\newcommand{\proj}{\text{proj}}
\newcommand{\corner}{\text{corner}}
\newcommand{\bfp}{\mathbf{p}}
\newcommand{\bfq}{\mathbf{q}}
\title{Law of Large Numbers for continuous $N$-particle ensembles at fixed temperature}
\author{Cesar Cuenca and Jiaming Xu}
\date{\today}
\begin{document}

\maketitle

\begin{abstract}
    In this paper, we find necessary and sufficient conditions for the Law of Large Numbers of averaged empirical measures of $N$-particle ensembles, in terms of the asymptotics of their Bessel generating functions, in the fixed temperature regime. This settles an open problem posed by Benaych-Georges, Cuenca and Gorin. For one direction, we use the moment method through Dunkl operators, and for the other we employ a special case of the formula of Chapuy--Dolega for the generating function of infinite constellations. As applications, we prove that the LLN for $\theta$-sums and $\theta$-corners of random matrices are given by the free convolution and free projection, respectively, regardless of the value of inverse temperature parameter $\theta$. We also prove the LLN for a time-slice of the $\theta$-Dyson Brownian motion.
\end{abstract}

\tableofcontents

\section{Introduction}

\subsection{Overview}

In a series of papers around the year 1990, Voiculescu laid the foundations of what would be known as the theory of free probability, from the viewpoint of operator algebras; see e.g.~\cite{Voi86}.
The subject soon became of main interest in probability theory, after unexpected connections with random matrix theory were discovered, such as the phenomenon of \emph{asymptotic freeness}~\cite{Voi91}.
This principle roughly states that large independent Hermitian random matrices behave like free random variables. In particular, the empirical spectral distribution of sums of independent Hermitian random matrices tends to the free convolution of measures, as the matrix dimensions tend to infinity.

By restricting our attention to the spectra of Hermitian matrices, the operation of adding two matrices admits a deformation depending on the real parameter $\beta>0$, in such a way that $\beta=1,2,4$ correspond to adding self-adjoint real, complex, and quaternionic matrices, respectively.
In this paper, however, we will employ the deformation parameter $\theta=\beta/2$, which is more suitable from the point of view of multivariate special functions.
The resulting operation is then called \emph{$\theta$-addition of matrices}, though in fact it is an operation on sequences of real numbers (the eigenvalues).
One can similarly define $\theta$-projection and $\theta$-multiplication, as the deformations of the operations of cutting corners and multiplying matrices, respectively (see \cite{GM20} for the precise definitions of these three operations).
These deformations are natural from the point of view of statistical mechanics, which treats random matrix eigenvalues as mutually repellent random particles and  $\beta$ as the inverse temperature, e.g.~$\beta$-ensembles and log-gas systems. We refer the interested reader to~\cite{Forrester} for a standard reference on these ideas.

The Law of Large Numbers (LLN) of $N$-particle systems that characterizes the asymptotic global behavior of the empirical measures has been a central topic in random matrix theory and integrable probability for many years.
The idea of using multivariate generating functions of representation theoretic origin on these problems dates back at least to~\cite{BuG15}, which studies discrete $N$-particle systems with the help of Schur generating functions.
This was later generalized to Jack generating functions with general parameter $\theta>0$ in~\cite{Hua21}.
In these articles, it was shown that the limits of empirical measures for $\theta$-additions of matrices can be described by the operation of quantized free convolution, instead of free convolution, in the regime of fixed temperature.
In more recent literature, the high-temperature limit regime, where $\theta\to 0$ at the same time as $N\rightarrow\infty$, so that $N\theta\rightarrow\gamma>0$, has also been a significant source of interest.
This double scaling regime was studied, for example, in \cite{CuD25(1), CuD25(2), CDM} and \cite{BCG} for discrete and continuous settings, respectively. 

In this paper, we prove the LLN for the $\theta$-sum of matrices, in the regime when $\theta>0$ is fixed and the number $N$ of eigenvalues (the matrix dimension) tends to infinity.
We also prove the LLN for the $\theta$-projection of matrices, in the same limit regime.
In fact, in our main theorem, we find necessary and sufficient conditions for the LLN of sequences of $N$-tuples in terms of the corresponding Bessel generating functions; this result answers an open problem posed by Benaych-Georges, Cuenca and Gorin in~\cite{BCG}.
We will then see that the results for $\theta$-sums and $\theta$-corners are simple corollaries.
We will also consider the $\theta$-Dyson Brownian motion started at arbitrary initial conditions.

We point out that our main \emph{if and only if LLN theorem} (namely, Theorem~\ref{thm:main}) has been independently proven by Yao~\cite{Y}.
However, his proof is completely different in one of the directions.
In fact, a new ingredient in our proof that the LLN implies an analytic condition on the sequence of Bessel generating functions of the measures is the recently-discovered topological expansion of Chapuy--Dolega~\cite{CD}.
This relates the combinatorial objects known as infinite constellations to noncrossing set partitions, or Łukasiewicz paths.

\subsection{Main result}

The Multivariate Bessel Functions (MBFs) are certain entire functions on $2N$ complex variables $a_1,\dots,a_N,x_1,\dots,x_N$, which depend on the inverse temperature parameter $\theta>0$.
They will be denoted by $B_{(a_1,\dots,a_N)}(x_1,\dots,x_N;\theta)$ and are defined as the normalized, joint symmetric eigenfunctions of Dunkl operators; see Section~\ref{sec:mbf} for details.

For the purposes of stating the main result in this introduction, we do not need an exact formula for the MBF.
We simply point out that if $N=1$, the MBF is $B_{(a)}(x;\theta)=e^{ax}$, while if $\theta\to 0$, the MBF becomes
\begin{equation}\label{eq:special_case}
\lim_{\theta\to 0}{B_{(a_1,\dots,a_N)}(\vecx;\theta)} = \frac{1}{N!}\sum_{\sigma\in S_N}{ e^{a_1x_{\sigma(1)} + a_2x_{\sigma(2)} + \,\dots\, + a_Nx_{\sigma(N)}} },
\end{equation}
where $\vecx:=(x_1,\dots,x_N)$. Also, for $\theta>0$ and the special case when $a_1<\dots<a_N$, the MBF admits an $\frac{N(N-1)}{2}$-dimensional integral representation; see~\cite[Prop.~3.3]{Cue}.

Then, for any probability measure $\mu_N$ on the closed Weyl chamber $\overline{\mathcal{W}_N} = \{ (a_1,\dots,a_N)\in\R^N \mid a_1\le \dots\le a_N \}$ satisfying the technical condition~\eqref{eq:technical} below, we will associate the following analytic function $G_\theta(\vecx;\mu_N)$ in a neighborhood of $\big(0^N\big)\in\C^N$:
    \[
    G_\theta(\vecx;\mu_N) := \int_{ a_1\le  a_2\le\dots\le a_N} B_{( a_1,\dots, a_N)}(\vecx;\theta)\mu_N(\d a_1,\dots,\d a_N).
    \]
This is called the \emph{Bessel generating function of $\mu_N$}.
In view of equation~\eqref{eq:special_case}, this should be regarded as a symmetric one-parameter $\theta$-deformation of the Fourier transform of $\mu_N$.

\begin{theorem}[See Theorem~\ref{thm:main} in the text for details]\label{thm:intro}
    The following two statements are equivalent, regarding a sequence $\{\mu_N\}_{N\ge 1}$, where each $\mu_N$ is a probability measure on $\R^N$.
    \begin{itemize}
        \item Let $a(N)=(a_1(N),...,a_N(N))\in\R^N$ be $\mu_N$-distributed, for each $N\in\Z_{\ge 1}$. There exist real numbers $m_1,m_2,\dots$ such that, for any $s\in\Z_{\ge 1}$ and $k_1,...,k_s\in \Z_{\ge 1}$, we have
    \begin{equation}\label{eqn:intro_convergence}
    \lim_{N\rightarrow\infty}{\E_{\mu_N}\left[\prod_{j=1}^s\left(\frac{1}{N}\sum_{i=1}^N a_i(N)^{k_j}\right)\right]} = \prod_{j=1}^s{m_{k_j}}.
    \end{equation}
    If this occurs, we shall 
    write:
    $$\lim_{N\rightarrow\infty}{a(N)} \stackrel{m}{=} \{m_k\}_{k\ge 1}.$$

        \item For any partition $\la$ with $\ell(\la)\le N$, let $a_\la^N$ be the coefficient of $p_\la(\vecx)$ in the expansion of $\ln\big(G_\theta(\vecx;\mu_N)\big)$ in the basis of power sum symmetric functions.
        Then there exist real numbers $\kappa_1,\kappa_2,\dots$ such that
    \begin{enumerate}[label=(\alph*)]
        \item $\displaystyle\lim_{N\to\infty}{\frac{a_{(d)}^N}{N} \bigg|_{\zeroes}} = \frac{\theta^{1-d}}{d}\cdot\kappa_d$, for all $d\in\Z_{\ge 1}$,
        \item $\displaystyle\lim_{N\to\infty}{\frac{a_\la^N}{N^{\ell(\la)}} \bigg|_{\zeroes}} = 0$, whenever $\ell(\la)\ge 2$.
    \end{enumerate}
    \end{itemize}
    If one (and therefore both) of the previous two equivalent statements holds, then $\{m_k\}_{k\ge 1}$ and $\{\kappa_\ell\}_{\ell\ge 1}$ are related to each other by the moment -- free cumulant formulas in equation~\eqref{eq_fixedtempmoment-cumulant}.
\end{theorem}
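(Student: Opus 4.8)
My plan is to prove the equivalence by establishing the two implications separately, with the moment–free cumulant correspondence emerging along the way. The crucial analytic object is the logarithm of the Bessel generating function, and the key technical tool is the action of Dunkl operators on multivariate Bessel functions, which diagonalize them with eigenvalue the power sums $p_k(\vec a)$. Since $B_{(\vec a)}(\vec 0;\theta)=1$, applying a symmetrized product of Dunkl operators at $\vec x = \vec 0$ and dividing by the appropriate power of $N$ extracts precisely the normalized moment averages $\E_{\mu_N}[\prod_j \frac1N p_{k_j}(\vec a(N))]$. The first task is therefore to set up a dictionary: express $\E_{\mu_N}[\prod_j \frac1N p_{k_j}(a(N))]$ as a differential-operator functional of $G_\theta(\vec x;\mu_N)$ evaluated at $\setzeroes$, and then re-expand this in terms of the coefficients $a_\lambda^N$ of $\ln G_\theta$. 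This combinatorial bookkeeping — tracking how products of Dunkl operators acting on $\exp(\sum_\lambda a_\lambda^N p_\lambda)$ produce sums over set partitions weighted by the $a_\lambda^N$ — is where the Leonov–Shiryaev-type cumulant structure enters, and it is what forces condition (b): only the ``fully connected'' terms, i.e.\ single rows $\lambda=(d)$, survive the $N\to\infty$ normalization, while any $a_\lambda^N$ with $\ell(\lambda)\ge 2$ must vanish after dividing by $N^{\ell(\lambda)}$ in order for the factorized limit in~\eqref{eqn:intro_convergence} to hold. The moment–free cumulant formula then drops out by identifying the surviving single-row contributions, rescaled by $\theta^{1-d}/d$, as the free cumulants in~\eqref{eq_fixedtempmoment-cumulant}.

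For the direction ``(LLN on moments) $\Rightarrow$ (analytic conditions (a),(b))'', the naive approach of inverting the moment method is delicate because knowing the limits of the mixed moments does not immediately control all the $a_\lambda^N$ uniformly; this is precisely where the excerpt signals the use of the Chapuy–Dolega topological expansion for infinite constellations. The idea is that the Chapuy–Dolega formula expresses the relevant generating function (a specialization counting constellations / branched covers, organized by genus-type parameter) as a sum over combinatorial maps, and a suitable specialization of it rewrites $\ln G_\theta$ — or rather the map $\{m_k\}\mapsto\{a_\lambda^N\}$ — as a series whose leading-order-in-$N$ behavior is governed by planar (genus-zero) contributions. Feeding the hypothesized limits $m_k$ into this expansion, one reads off that $a_{(d)}^N/N \to \frac{\theta^{1-d}}{d}\kappa_d$ with $\kappa_d$ the free cumulants associated to $\{m_k\}$ via~\eqref{eq_fixedtempmoment-cumulant}, and that all higher-length $a_\lambda^N$ are lower order, giving (b). I expect this to be the main obstacle: one must verify that the Chapuy–Dolega formula specializes cleanly to this setting, control the combinatorics of the relevant constellation counts uniformly in $N$, and match the output with the free-probabilistic quantities — this is substantially harder than the forward direction.

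For the direction ``(a),(b) $\Rightarrow$ (LLN on moments)'', I would run the moment-method machinery through Dunkl operators in the forward direction. Using the dictionary from the first paragraph, each mixed moment $\E_{\mu_N}[\prod_{j=1}^s \frac1N p_{k_j}(\vec a(N))]$ is a finite sum, indexed by set partitions of a ground set of size $\sum_j k_j$ refining the block structure imposed by the $k_j$, of monomials in the quantities $a_\lambda^N\big|_{\setzeroes}$; under hypotheses (a) and (b), the only partitions contributing in the limit are those where each block lies inside a single $k_j$-group and corresponds to a single-row $\lambda=(d)$, and these contribute exactly $\prod_j m_{k_j}$ once the free cumulants $\kappa_d$ are reassembled into moments $m_{k_j}$ via the Dunkl-operator combinatorics, which one checks coincides with the free moment–cumulant relations. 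The estimates needed here are purely polynomial-degree bookkeeping in $N$, so no analytic subtlety arises; the only care required is to justify differentiating $G_\theta$ under the integral sign and interchanging limits, which is handled by the technical condition~\eqref{eq:technical} guaranteeing analyticity of $G_\theta$ near the origin. Assembling the two implications, together with the identification of $\{\kappa_\ell\}$ made in either direction, yields the stated equivalence and the moment–free cumulant relation~\eqref{eq_fixedtempmoment-cumulant}.
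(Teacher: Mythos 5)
Your proposal takes essentially the same route as the paper: the implication from conditions (a)--(b) to the LLN is carried out by the moment method, letting products of Dunkl operators act on $e^{\ln G_\theta}$ and checking that only the single-row coefficients $a_{(d)}^N$ survive the normalization (the paper organizes the surviving terms by Lukasiewicz paths, which is how \eqref{eq_fixedtempmoment-cumulant} is stated, rather than by set partitions, but this is equivalent bookkeeping), while the converse is obtained by specializing the Chapuy--Dolega constellation expansion of the multivariate Bessel function and reading off that the genus-zero, one-face terms dominate. The strategy is correct in both directions and matches the paper's.
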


An important case of our theorem is when $\{m_k\}_{k\ge 1}$ is the sequence of moments of a unique probability measure $\nu$ on $\R$.
Then the convergence in Eqn.~\eqref{eqn:intro_convergence} implies that the empirical measures $\frac{1}{N}\sum_{i=1}^N{\delta_{a_i(N)/N}}$ converge weakly, in probability, to $\nu$.

\subsection{Applications: LLN for $\theta$-sums and $\theta$-corners}

We briefly outline here the applications of Theorem~\ref{thm:main}, to be discussed in more detail in Section~\ref{sec:application}.

\smallskip

Given two independent random $N\times N$ Hermitian matrices $A$, $B$ with uniformly random eigenvectors, and deterministic eigenvalues $a_1,\dots,a_N$ and $b_1,\dots,b_N$, respectively, the behavior of the (random) eigenvalues of the sum
$$C=A+B,$$
as $N\rightarrow\infty$, has been heavily studied in the literature; see e.g.~\cite{Voi91,B1,BCG}, and references therein.
For a general $\theta>0$, the one-parameter generalization of the above operation on eigenvalues, known as the $\theta$-addition, is defined as follows.

\begin{proposition-definition}
Given arbitrary $a=(a_1\le\dots\le a_N),\,b=(b_1\le\dots\le b_N)\in\R^N$, there exists a unique normalized, generalized function\footnote{Throughout this text, a \emph{generalized function} is a continuous linear functional on the set of compactly supported smooth functions.} on the Weyl chamber $\mathcal{W}_N:=\{c=(c_1\le\dots\le c_N)\}\subset\R^N$, denoted by $a+_\theta b$, such that
\begin{equation}\label{eq_thetaaddition1}
    B_{(a_1,\dots,a_N)}(\vecx;\theta)B_{(b_1,\dots,b_N)}(\vecx;\theta) = \int_{c=(c_1\le\dots\le c_N)} B_{(c_1,\dots,c_N)}(\vecx;\theta)\,\d_{a+_\theta b}(c),
\end{equation}
for all $\vecx\in\R^N$.
The generalized function above is compactly supported, so the right side of Eqn.~\eqref{eq_thetaaddition1} is well-defined.
Furthermore, $a+_\theta b$ can be equally defined for random $N$-tuples $a=(a_1\le\dots\le a_N)$, $b=(b_1\le\dots\le b_N)$, by the equality
\begin{equation}\label{eq_thetaaddition2}
\E_a\Big[ B_{(a_1,\dots,a_N)}(\vecx;\theta) \Big]
\cdot\E_{b}\Big[ B_{(b_1,\dots,b_N)}(\vecx;\theta) \Big]
= \E_{c=a+_\theta b} \Big[ B_{(c_1,\dots,c_N)}(\vecx;\theta) \Big],
\end{equation}
for all $\vecx\in\R^N$, whenever the distributions of $a$ and $b$ are exponentially decaying probability measures (see Definition \ref{def:bgf}), and $\E_{c=a+_\theta b} \Big[ B_{(c_1,\dots,c_N)}(\vecx;\theta)\Big]$ is again taken by testing the generalized function corresponding to $c$ against the multivariate Bessel function.

Moreover, the distribution of $c=a+_{\theta}b$ is uniquely determined by the equality above.
\end{proposition-definition}

It is a well-known conjecture that $a+_\theta b$ is not only a generalized function, but a probability measure; see e.g.~\cite{A} for more details. At the discrete level, this positivity conjecture is related to Stanley's conjecture of positivity of Jack--Littlewood--Richardson coefficients; see~\cite{S}. When $\theta=\frac{1}{2},1,2$, equation~\eqref{eq_thetaaddition2} recovers the usual additions of real, complex and quaternionic Hermitian random matrices. 

It is a classical result of Voiculescu~\cite{Voi91} that, if $\theta=\frac{1}{2},1$, and assuming that the empirical spectral measures of the independent $N\times N$  matrices $A(N)$ and $B(N)$ weakly converge to some probability measures $\mu_a$ and $\mu_b$, then the empirical measures of $C(N)=A(N)+B(N)$ also converge to some probability measure $\mu_c$. Moreover, this probability measure is the \emph{free convolution}
$$\mu_{c}=\mu_a\boxplus\mu_b,$$
that can be defined, for example, through the notion of free cumulants. We refer the reader to \cite{MS,NS} for standard references.

Our first application of Theorem \ref{thm:main} is the following generalization of Voiculescu's result, for all $\theta>0$. Due to the analytic subtlety of $a+_{\theta}b$, we state our result in terms of moments, i.e, testing the empirical spectral measures with polynomials instead of bounded continuous functions.

\begin{theorem}\label{thm:beta_addition}
    Let $a(N)=(a_1(N)\le\dots\le a_N(N))$, $b(N)=(b_1(N)\le\dots\le b_N(N))$, $N=1,2,\dots$, be two sequences of independent exponentially decaying random vectors. Suppose that there exist two sequences of real numbers $\{m_{k}^{a}\}_{k=1}^{\infty}$, $\{m_{k}^{b}\}_{k=1}^{\infty}$ such that
    \[
    \lim_{N\to\infty}{\frac{a(N)}{N}} \stackrel{m}{=} \{m_{k}^{a}\}_{k=1}^{\infty},\qquad
    \lim_{N\to\infty}{\frac{b(N)}{N}} \stackrel{m}{=} \{m_{k}^{b}\}_{k=1}^{\infty}.
    \]
    Then
    \[
    \lim_{N\to\infty}{\frac{a(N) +_\theta b(N)}{N}} \stackrel{m}{=} \{m_{k}^{a}\}_{k=1}^{\infty} \boxplus \{m_{k}^{b}\}_{k=1}^{\infty},
    \]
    where the right hand side is the free convolution of $ \{m_{k}^{a}\}_{k=1}^{\infty}$ and $\{m_{k}^{b}\}_{k=1}^{\infty}$.
\end{theorem}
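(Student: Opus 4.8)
The plan is to deduce Theorem~\ref{thm:beta_addition} from the main Theorem~\ref{thm:main} (i.e., Theorem~\ref{thm:intro}) by passing through the analytic characterization in terms of Bessel generating functions, exploiting the multiplicativity in equation~\eqref{eq_thetaaddition2}. First I would set $\mu_N^a$, $\mu_N^b$ to be the laws of $a(N)$, $b(N)$ and $\mu_N^c$ the law of $c(N) = a(N)+_\theta b(N)$. By the exponential decay hypothesis, all three Bessel generating functions $G_\theta(\vecx;\mu_N^a)$, $G_\theta(\vecx;\mu_N^b)$, $G_\theta(\vecx;\mu_N^c)$ are analytic in a common neighborhood of $(0^N)$, and by~\eqref{eq_thetaaddition2} they satisfy
\[
G_\theta(\vecx;\mu_N^c) = G_\theta(\vecx;\mu_N^a)\cdot G_\theta(\vecx;\mu_N^b),
\]
hence taking logarithms,
\[
\ln G_\theta(\vecx;\mu_N^c) = \ln G_\theta(\vecx;\mu_N^a) + \ln G_\theta(\vecx;\mu_N^b).
\]
Therefore the power-sum expansion coefficients are additive: $a_\la^{N,c} = a_\la^{N,a} + a_\la^{N,b}$ for every partition $\la$ with $\ell(\la)\le N$, and this persists after evaluating at $\setzeroes$.

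Next I would invoke the hard direction of Theorem~\ref{thm:intro} applied to $\{\mu_N^a\}$ and $\{\mu_N^b\}$ separately: the moment convergence hypotheses $\lim a(N)/N \stackrel{m}{=}\{m_k^a\}$ and $\lim b(N)/N \stackrel{m}{=}\{m_k^b\}$ yield real numbers $\{\kappa_d^a\}$, $\{\kappa_d^b\}$ (the free cumulants of the limiting moment sequences) with
\[
\lim_{N\to\infty}\frac{a_{(d)}^{N,a}}{N}\bigg|_{\setzeroes} = \frac{\theta^{1-d}}{d}\kappa_d^a,\qquad
\lim_{N\to\infty}\frac{a_\la^{N,a}}{N^{\ell(\la)}}\bigg|_{\setzeroes} = 0 \text{ for } \ell(\la)\ge 2,
\]
and likewise for $b$. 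Adding the two families of asymptotics and using the additivity $a_\la^{N,c}=a_\la^{N,a}+a_\la^{N,b}$, I obtain that $\{\mu_N^c\}$ satisfies condition (a) with limit $\frac{\theta^{1-d}}{d}(\kappa_d^a+\kappa_d^b)$ and condition (b) with limit $0$. Then the easy direction of Theorem~\ref{thm:intro} applied to $\{\mu_N^c\}$ produces a moment sequence $\{m_k^c\}$ with $\lim c(N)/N \stackrel{m}{=}\{m_k^c\}$, whose free cumulants are $\kappa_d^c = \kappa_d^a + \kappa_d^b$ by the moment--free cumulant relations of~\eqref{eq_fixedtempmoment-cumulant}. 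Since additivity of free cumulants is precisely the definition of free convolution at the level of moment sequences, this says $\{m_k^c\} = \{m_k^a\}\boxplus\{m_k^b\}$, which is the claim.

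A couple of points that need care rather than being genuine obstacles: one must check that $c(N)=a(N)+_\theta b(N)$ is itself a bona fide (exponentially decaying) probability measure so that Theorem~\ref{thm:intro} applies to it — but here I would note that the theorem and its analytic condition are really statements about the analytic function $G_\theta(\vecx;\mu_N^c)$ and its log-expansion coefficients, and the Proposition-Definition guarantees $G_\theta(\vecx;\mu_N^c)$ is a well-defined analytic function near $(0^N)$ (being a product of two such), with the requisite decay inherited from the factors; if the ``if'' direction of Theorem~\ref{thm:main} is stated for genuine measures only, a small amount of extra bookkeeping is needed to verify the moment convergence~\eqref{eqn:intro_convergence} for $c(N)$ directly, or alternatively one appeals to the version of Theorem~\ref{thm:main} phrased purely in terms of Bessel generating functions. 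The only real content being imported is Theorem~\ref{thm:main} itself; the proof of Theorem~\ref{thm:beta_addition} is then a short formal consequence. The main ``obstacle'' — really just the place requiring attention — is ensuring the analytic domains of the three Bessel generating functions overlap and that the logarithm and its power-sum expansion are legitimate near $(0^N)$, which follows from the exponential decay assumption and the finitely-many-variables entire-function estimates for the MBF.
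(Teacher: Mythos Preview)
Your proposal is correct and follows essentially the same route as the paper: apply the ``only if'' direction of Theorem~\ref{thm:main} to each of $a(N)$ and $b(N)$ to extract free cumulants, use the multiplicativity~\eqref{eq_thetaaddition2} of Bessel generating functions so that the power-sum coefficients of the logarithm add, and then apply the ``if'' direction to $c(N)=a(N)+_\theta b(N)$. Your caveat about $c(N)$ being only a generalized function is well observed; the paper sidesteps this by working throughout at the level of Bessel generating functions (which are genuine analytic functions) and of moments extracted from them via Proposition~\ref{prop:action_dunkl}, exactly as you suggest.
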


Besides addition, another natural operation on an $N\times N$ Hermitian matrix $A$ with eigenvalues $(a_1\le\dots\le a_N)=:a^{(N)}$ is the projection on corners, i.e., the operation of taking the $M\times M$ principal submatrix $A^{(M)}$ consisting of the first $M$ rows and columns of $A$, for some $1\le M<N$.
We will consider the case where the eigenvalues of $A$ are deterministic and the eigenvectors are uniformly random as before; then the random $M$-tuple of real eigenvalues of $A^{(M)}$ will be denoted by $\big(a^{(M)}_1\le...\le a^{(M)}_M\big)=:a^{(M)}$.
We can do the same projection on corners for real symmetric and quaternionic Hermitian matrices.
In all cases, by considering all principal submatrices $A^{(M)}$, as $M$ ranges over $1,2,\dots,N$, we obtain a triangular array $\big\{a^{(M)}_i \colon 1\le i\le M\le N\big\}$ of random eigenvalues, where the top row $a^{(N)}$ is the fixed set of eigenvalues of $A$.
This random array is known as the \emph{corners process}, and a key feature of it is the \emph{interlacing condition} 
\[
a^{(k+1)}_1\le a^{(k)}_1\le a^{(k+1)}_2\le\dots \le a^{(k+1)}_k\le a^{(k)}_k\le a^{(k+1)}_{k+1},
\]
denoted by $a^{(k)}\prec a^{(k+1)}$, and valid for all $k=1,...,N-1$.

In all cases, the joint density function for the array of eigenvalues $\big\{a^{(M)}_i\big\}$ can be calculated explicitly.
It turns out that this formula can be naturally expressed as a function of a positive parameter $\theta$, such that for $\theta=\frac{1}{2},1,2$, it specializes to the cases of real symmetric, complex Hermitian and quaternionic Hermitian matrices, respectively.
Moreover, for a general $\theta>0$, this formula is the density of an actual probability measure on the set of interlacing arrays $a^{(1)}\prec\cdots\prec a^{(N-1)}\prec a^{(N)}$, where $a^{(N)}=a$ is deterministic.
This measure is called the \emph{$\theta$-corners process}; directly, it can be defined as the probability measure on the set of $\frac{N(N-1)}{2}$ interlacing coordinates $a^{(1)}\prec\cdots\prec a^{(N)}=a$, with density function
\begin{equation}\label{eq_thetacorners}
    \Lambda_{\theta}(\vec{y};a) := \frac{1}{Z_{N,\theta,a}}\prod_{k=1}^{N-1}\prod_{1\le i<j\le k}|y^k_i - y_j^k|^{2-2\theta}\prod_{p=1}^{k}\prod_{q=1}^{k+1}|y_{p}^{k}-y_{q}^{k+1}|^{\theta-1}\cdot\mathbf{1}_{y^{1}\prec y^{2}\prec\cdots \prec y^{N-1}\prec y^N=a},
\end{equation}
where $\vec{y}$ is the $\frac{N(N-1)}{2}$-dimensional vector with coordinates $\{y^k_i \colon 1\le i\le k\le N-1\}$, and where $Z_{N,\theta,a}$ is the normalization constant; see e.g.~\cite[Sec.~2]{Cue}.
We will assume the strict inequalities $a_1<\dots<a_N$, so that no particle $y^k_i$ is deterministic.

\begin{definition}
    For $1\le M< N$, $\theta>0$, and $a=(a_1<\dots<a_N)$ a random real $N$-tuple, the random $M$-tuple whose distribution is the marginal of $y^{(M)}$, according to the distribution $\Lambda_{\theta}(\vec{y};a)$, is denoted by $\text{corner}^{N}_{M}(a)$ and called the $M^{th}$ level marginal of the $\theta$-corners process of $a$.
\end{definition}

For a random $N$-tuple $a(N)=(a_1(N)<\dots<a_N(N))$, we denote its empirical measure by
\begin{equation}\label{eq:empirical_measure}
\mu[a(N)] := \frac{1}{N}\sum_{i=1}^{N}\delta_{\frac{a_{i}(N)}{N}}.
\end{equation}

\begin{theorem}\label{thm:beta_projection}
    Let $a(N)$, $N=1,2,\dots$, be a sequence of random tuples and let $\mu_a$ be a compactly supported probability measure such that
    \begin{equation}\label{assumption_corners}
    \lim_{N\rightarrow\infty}\mu[a(N)]\stackrel{m}{=}\mu_a.
    \end{equation}
    Then
    \[
    \lim_{N\to\infty}{\mu\big[\corner^N_{\lfloor\alpha N\rfloor}\big({a(N)}\big)\big]} \stackrel{m}{=} \proj_\alpha\big(\mu_a\big),
    \]
    where the right hand side is the free $\alpha$-projection of $\mu_a$, defined as the unique probability measure with free cumulants 
    $$\kappa_{d}^{(\alpha)} = \frac{1}{\alpha}\kappa_{d}, \quad\text{for all }d\ge 1.$$
\end{theorem}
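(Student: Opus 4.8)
The plan is to derive Theorem~\ref{thm:beta_projection} from the equivalence of Theorem~\ref{thm:main}, the bridge being the branching rule for Multivariate Bessel Functions, which identifies the Bessel generating function of the $\theta$-corners at level $M$ with a restriction of that of $a(N)$. As a preliminary reduction, note that $\mu[a(N)]\stackrel{m}{=}\mu_a$ with $\mu_a$ compactly supported forces $\mathbf{P}\big[\|a(N)\|_\infty>N^{C}\big]\to0$ for every fixed $C>1$ (Markov's inequality on high even moments), while by the interlacing property the coordinates of $\corner^N_{\lfloor\alpha N\rfloor}(a(N))$ lie in $[a_1(N),a_N(N)]$; a short tail estimate then shows that conditioning on $\{\|a(N)\|_\infty\le N^{C}\}$ alters neither $\lim\mu[a(N)]$ nor $\lim\mu[\corner^N_{\lfloor\alpha N\rfloor}(a(N))]$, so we may assume each $a(N)$ is exponentially decaying and all generating functions below are well-defined near $\zeroes$.

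For deterministic $a=(a_1\le\cdots\le a_N)$, specializing the last $N-M$ variables of $B_{(a_1,\dots,a_N)}(\vecx;\theta)$ to $0$ expresses it as the average of $B_{(c_1,\dots,c_M)}(x_1,\dots,x_M;\theta)$ over $c$ distributed according to the level-$M$ marginal of the $\theta$-corners density $\Lambda_\theta(\,\cdot\,;a)$ of~\eqref{eq_thetacorners}; this is the standard Gibbs/branching property of MBFs (see~\cite{Cue}). Averaging over $a(N)\sim\mu_N$ and recalling the definition of $\corner^N_M$, we obtain, with $M=\lfloor\alpha N\rfloor$,
\[
G_\theta\big((x_1,\dots,x_M);\text{law of }\corner^N_M(a(N))\big)=G_\theta\big((x_1,\dots,x_M,0^{N-M});\mu_N\big).
\]

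Writing $\ln G_\theta(\vecx;\mu_N)=\sum_{\ell(\la)\le N}a_\la^N\,p_\la(\vecx)$, the substitution $x_{M+1}=\cdots=x_N=0$ preserves the degree of each homogeneous term and sends $p_k(\vecx)$ to the $M$-variable power sum $p_k(x_1,\dots,x_M)$; since for a fixed partition $\la$ with $|\la|\le M$ the power sums $\{p_\mu(x_1,\dots,x_M):|\mu|=|\la|\}$ are monomials in the algebraically independent $p_1,\dots,p_M$ and hence linearly independent, the coefficient of $p_\la$ in $\ln G_\theta\big((x_1,\dots,x_M,0^{N-M});\mu_N\big)$ is again exactly $a_\la^N$ for all large $N$. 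Applying the ``only if'' direction of Theorem~\ref{thm:main} to $\{\mu_N\}$ (its hypothesis is precisely $\mu[a(N)]\stackrel{m}{=}\mu_a$) yields the free cumulants $\kappa_1,\kappa_2,\dots$ of $\mu_a$ together with $\tfrac{a_{(d)}^N}{N}\big|_{\zeroes}\to\tfrac{\theta^{1-d}}{d}\kappa_d$ and $\tfrac{a_\la^N}{N^{\ell(\la)}}\big|_{\zeroes}\to0$ for $\ell(\la)\ge2$. Because $N/M\to\alpha^{-1}$, the very same coefficients obey $\tfrac{a_{(d)}^N}{M}\big|_{\zeroes}\to\tfrac{\theta^{1-d}}{d}\cdot\tfrac{\kappa_d}{\alpha}$ and $\tfrac{a_\la^N}{M^{\ell(\la)}}\big|_{\zeroes}\to0$ for $\ell(\la)\ge2$, i.e.\ conditions (a)--(b) of Theorem~\ref{thm:main} hold for the sequence of $\lfloor\alpha N\rfloor$-tuples $\corner^N_{\lfloor\alpha N\rfloor}(a(N))$ with free cumulants $\kappa_d/\alpha$. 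The ``if'' direction of Theorem~\ref{thm:main} (the dimension $\lfloor\alpha N\rfloor\to\infty$) then gives $\mu[\corner^N_{\lfloor\alpha N\rfloor}(a(N))]\stackrel{m}{=}\{m_k^{(\alpha)}\}$, where $\{m_k^{(\alpha)}\}$ is produced from $\{\kappa_d/\alpha\}$ by the moment--free-cumulant relations~\eqref{eq_fixedtempmoment-cumulant}; since $\mu_a$ is compactly supported, $\{\kappa_d/\alpha\}$ are the free cumulants of a compactly supported probability measure, which is by definition $\proj_\alpha(\mu_a)$, completing the proof.

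The main obstacle is the third step: one must reconcile the present notion of ``power-sum coefficient'' $a_\la^N$ — including the precise meaning of the $|_{\zeroes}$ evaluation and the index range $\ell(\la)\le N$ — with its analogue for the restricted, $\lfloor\alpha N\rfloor$-variable generating function, and verify that setting $N-M$ of the Bessel variables to zero leaves the relevant coefficients literally unchanged. It is exactly this invariance, together with the bare replacement of the normalization $N$ by $\lfloor\alpha N\rfloor$, that manufactures the factor $1/\alpha$ in the free cumulants and hence the free projection; everything else (the branching property and both directions of Theorem~\ref{thm:main}) is used as a black box, and the truncation in the preliminary reduction is routine.
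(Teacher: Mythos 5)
Your proposal is correct and follows essentially the same route as the paper: the branching identity $G_\theta(\,\cdot\,;\corner^N_M(a))=G_\theta\big((x_1,\dots,x_M,0^{N-M});\mu_N\big)$ (the paper's Proposition~\ref{prop:bgfcorners}), the observation that the power-sum coefficients $a_\la^N$ with $|\la|\le M$ are unchanged by the restriction, and then the ``only if'' followed by the ``if'' direction of Theorem~\ref{thm:main} with the renormalization $N\mapsto\lfloor\alpha N\rfloor$ producing the factor $1/\alpha$. Your preliminary truncation to reduce to exponentially decaying laws and your linear-independence justification of the coefficient invariance are slightly more careful than the paper's write-up, which instead closes by invoking Carleman's condition to identify the limiting measure.
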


\subsection{Methodology}

Theorem \ref{thm:main} has two directions, the ``if'' and ``only if'' parts; we take quite different approaches for their proofs.
In the ``if'' direction, the key is that we are able to extract the moments from the Bessel generating functions by means of the Dunkl operators.
While Dunkl operators were introduced in a purely algebraic background and historically play a role in the study of Calogero--Moser--Sutherland quantum many-body system, they have recently found many applications in random matrix theory and statistical mechanics; see~\cite{BCG,X,KX,GXZ,Y}.

One new feature in our argument, compared to the previous literature, is that we consider the expansions of logarithms of Bessel generating functions in the basis of power sum symmetric polynomials instead of monomial symmetric polynomials.
While the coefficients for the latter basis can be more easily extracted by taking partial derivatives, hence appearing like a more natural choice, their asymptotics do not give the right characterization of the LLN. 

In the ``only if'' direction, while it seems doable to reverse the arguments developed in the forward direction, we employed a different tool that greatly simplified the proof.
This is the new topological expansion, due to Chapuy--Dolega~\cite{CD}, of the $\theta$-generalized tau-function in terms of the combinatorial objects called \emph{constellations}, which are certain maps on closed orientable and non-orientable surfaces.
We take a specialization of this formula and identify the tau-function with the multivariate Bessel function; then the topological expansion highlights what the asymptotically relevant terms are.
It is worth noting that there have been other connections between topological expansions and random matrices; see, e.g.~\cite{BG13,CE06,BoGF}.
Although topological expansions arise in very different ways compared to our approach, there might be a deeper structural explanation lying behind their connection to random matrices; we hope to investigate this in future works.

\subsection*{Acknowledgments}

The authors are grateful to Houcine Ben Dali,  Maciej Do\l{}\k{e}ga and Vadim Gorin for helpful discussions.
CC was partially supported by the NSF grant DMS-2348139 and by the Simons Foundation’s Travel Support for Mathematicians grant MP-TSM-00006777.

\section{Bessel generating functions}

\subsection{Conventions}

In this paper, $N$ denotes a positive integer that will tend to infinity.
We will work with $N$ variables $x_1,\dots,x_N$ and let $\vec{x}=(x_1,\dots,x_N)$.
The partial derivatives will be denoted
\[
\partial_i := \frac{\partial}{\partial x_i},\qquad i=1,\dots,N,
\]
for simplicity, and the operator that permutes the variables $x_i$ and $x_j$, where $i\ne j$, will be denoted $\sigma_{i,j}$.
Moreover, $\theta>0$ will be a fixed parameter throughout the paper.

\subsection{Dunkl operators}

The Dunkl operators, introduced in \cite{D}, are defined by
\[
\D_i^{N,\theta} := \partial_i + \theta\sum_{j\colon j\ne i} \frac{1}{x_i-x_j}(1-\sigma_{i,j}),\qquad i=1,\dots,N.
\]
They pairwise commute, $\D_i^{N,\theta}\D_j^{N,\theta} = \D_j^{N,\theta}\D_i^{N,\theta}$, for all $i,j$, and their symmetric joint eigenfunctions, discussed in the next subsection, will play an important role in this paper.
We can view $\D_i^{N,\theta}$ as operators acting on holomorphic functions on domains of $\C^N$ that are symmetric with respect to permutations of the coordinates.
We will also use the symmetric operators
\[
\P_k^{N,\theta} := \sum_{i=1}^N{ \Big(\D_i^{N,\theta}\Big)^k },\qquad k=1,2,\cdots.
\]
For simplicity, we drop the superscripts $N,\theta$, so that $\D_i^{N,\theta}, \P_k^{N,\theta}$ will simply be denoted $\D_i, \P_k$.

\subsection{Multivariate Bessel functions}\label{sec:mbf}

\begin{theorem}[\cite{O}]\label{thm:bessel_existence}
    Let $a_1,\dots,a_N\in\C$ be arbitrary. There exists a unique holomorphic function $F(x_1,\dots,x_N)$, normalized by $F(0^N)=1$, that is symmetric with respect to its variables and solves the differential equations
    \[
    \P_k F(x_1,\dots,x_N) = \left( \sum_{i=1}^N{a_i^k} \right)F(x_1,\dots,x_N),\quad\textrm{for all }k\ge 1.
    \]
    If we denote this solution by $B_{(a_1,\dots,a_N)}(x_1,\dots,x_N;\theta)$, then the map $(a,x)\mapsto B_a(x;\theta)$ can be extended to an entire function on $2N$ variables.
\end{theorem}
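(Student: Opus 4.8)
The plan is to construct $B_{(a_1,\dots,a_N)}(\vecx;\theta)$ out of the non-symmetric Dunkl kernel and then symmetrize. Recall that for every $a=(a_1,\dots,a_N)\in\C^N$ there is a unique holomorphic $E_a$ on $\C^N$ with $E_a(0^N)=1$ solving $\D_i E_a=a_iE_a$ for all $i$, that $(a,\vecx)\mapsto E_a(\vecx)$ extends to an entire function on $\C^{2N}$, and that $E_{\sigma a}(\vecx)=E_a(\sigma^{-1}\vecx)$ for $\sigma\in S_N$. If a self-contained account is wanted, these come from a graded recursion: writing $E_a=\sum_{n\ge0}E_n$ with $E_n$ homogeneous of degree $n$ and $E_0=1$, the operator $\mathcal{A}:=\sum_i x_i\D_i$ is easily checked to equal $\sum_i x_i\partial_i+\theta\sum_{i<j}(1-\sigma_{i,j})$, i.e.\ the Euler operator plus a term that is self-adjoint and positive semidefinite for the Fischer inner product (it is $\binom N2-\sum_{i<j}\sigma_{i,j}$, whose eigenvalue on the trivial $S_N$-component is $0$ and is nonnegative elsewhere); hence $\mathcal{A}$ is invertible on degree-$m$ homogeneous polynomials with $\|\mathcal A^{-1}\|\le 1/m$ for $m\ge1$. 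This forces $E_{n+1}=\mathcal A^{-1}\big(\sum_i a_ix_iE_n\big)$, which gives uniqueness of the formal series; one then verifies (by a short induction using $\D_i\D_j=\D_j\D_i$) that this choice actually solves all $N$ equations $\D_iE_{n+1}=a_iE_n$, and the norm bound together with $\|x_if\|\le\sqrt{n+1}\,\|f\|$ yields $\|E_n\|\le(\sum_i|a_i|)^n/\sqrt{n!}$, hence local-uniform convergence to an entire function with coefficients polynomial in $a$, so $E$ is jointly entire.

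Granting this, I would set
\[
B_{(a_1,\dots,a_N)}(\vecx;\theta):=\frac{1}{N!}\sum_{\sigma\in S_N}E_{a}(\sigma\vecx).
\]
This is entire on $\C^{2N}$ and normalized since $E_a(0^N)=1$; it is symmetric in $\vecx$ by construction, and also symmetric in $a$ because $E_a(\sigma\vecx)=E_{\sigma^{-1}a}(\vecx)$ by the equivariance above. Since $\D_iE_a=a_iE_a$ gives $\D_i^kE_a=a_i^kE_a$, hence $\P_kE_a=(\sum_i a_i^k)E_a$, and since each $\P_k=\sum_i\D_i^k$ is invariant under permuting the variables (so $\P_k(f\circ\sigma)=(\P_kf)\circ\sigma$), averaging yields $\P_k B=(\sum_i a_i^k)B$ for all $k\ge1$. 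This establishes existence and joint entireness.

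For uniqueness, suppose $F$ is another symmetric holomorphic function with $F(0^N)=1$ and $\P_kF=(\sum_i a_i^k)F$ for all $k$. Put $H:=F-B$; it is symmetric, $H(0^N)=0$, and $\P_kH=(\sum_i a_i^k)H$. Since $\P_k$ is homogeneous of degree $-k$ while $\sum_i a_i^k$ is a scalar, comparing homogeneous components shows that if $H\ne0$ then its lowest-degree nonzero component $H_m$ (with $m\ge1$) satisfies $\P_kH_m=0$ for every $k\ge1$. So the entire problem reduces to the claim that the joint kernel of all the $\P_k$ on symmetric homogeneous polynomials of positive degree is $\{0\}$. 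This is the step I expect to be the main obstacle. It follows from the nondegeneracy, for $\theta>0$, of the bilinear form $\langle p_\lambda,\,g\rangle:=\big(\P_{\lambda_1}\P_{\lambda_2}\cdots g\big)(0^N)$ on symmetric polynomials ($g$ homogeneous of degree $|\lambda|$), which is, up to normalization, the Jack inner product and is positive definite; equivalently, one may invoke Dunkl's nondegeneracy theorem for the form $(p(\D)q)(0^N)$ on all of $\C[\vecx]$, valid for $\theta\ge0$. Given this, $\P_kH_m=0$ for all $k$ forces $\langle p_\lambda,H_m\rangle=0$ for all $\lambda\vdash m$, hence $H_m=0$, a contradiction; therefore $F=B$.

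The two places where real work is needed are (i) the consistency check that the recursively defined coefficients of $E_a$ satisfy all $N$ Dunkl eigenequations — a standard but slightly intricate argument that can also be replaced by citing the existence of the polynomial representation of the rational Cherednik algebra $H_\theta(S_N)$ — and (ii), the decisive one, the triviality of $\bigcap_{k\ge1}\ker\P_k$ on positive-degree symmetric polynomials; everything else (symmetrization, the eigenvalue computation $\P_k\mapsto\sum_i a_i^k$, and joint analyticity) is bookkeeping once the Dunkl-kernel facts are in hand.
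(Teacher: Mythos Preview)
The paper does not prove this statement at all; Theorem~\ref{thm:bessel_existence} is quoted from Opdam~\cite{O} and used as a black box, so there is no in-paper argument to compare against.

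Your outline is the standard route to this result and is essentially correct: build the nonsymmetric Dunkl kernel $E_a$ via the graded recursion governed by the Dunkl--Euler operator $\mathcal A=\sum_i x_i\D_i=\sum_i x_i\partial_i+\theta\sum_{i<j}(1-\sigma_{i,j})$, symmetrize over $S_N$, use $S_N$-invariance of each $\P_k$ to pass the eigenvalue equation through the averaging, and deduce uniqueness from positive definiteness of the Dunkl pairing $(p,q)\mapsto(p(\D)q)(0)$ restricted to symmetric polynomials. The two points you single out are exactly the substantive ones: (i) the consistency check that the recursion $E_{n+1}=\mathcal A^{-1}\big(\sum_i a_ix_iE_n\big)$ really produces a simultaneous eigenfunction of all $\D_i$ (this hinges on the commutator identities $[\D_j,x_i]$ and ultimately on $[\D_i,\D_j]=0$, or, as you note, can be bypassed by invoking the polynomial representation of the rational Cherednik algebra), and (ii) Dunkl's nondegeneracy theorem for the pairing, which is a genuine input. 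Everything else is, as you say, bookkeeping. One cosmetic point: your equivariance formula $E_{\sigma a}(\vecx)=E_a(\sigma^{-1}\vecx)$ versus the later $E_a(\sigma\vecx)=E_{\sigma^{-1}a}(\vecx)$ should be made consistent, but since you sum over the whole group this has no effect on the conclusion.
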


Due to the uniqueness of Theorem~\ref{thm:bessel_existence}, it follows that $B_{(a_1,\dots,a_N)}(x_1,\dots,x_N;\theta)$ is also symmetric with respect to the variables $a_i$.
When $a_1<\dots<a_N$, the multivariate Bessel function admits the following integral representation (see \cite[Prop.~3.3]{Cue}):
\begin{equation}\label{eq_besselbranching}
B_a(x_1,\dots,x_N;\theta)=\int_{y^1\prec y^2\prec\dots\prec y^{N-1}}\exp\left(\sum_{k=1}^{N}x_{k}\left(\sum_{i=1}^{k}y^{(k)}_{i}-\sum_{j=1}^{k-1}y^{(k-1)}_{j}\right)\right)\Lambda(\vec{y};a)d\vec{y},
\end{equation}
where $\Lambda_{\theta}(\vec{y},a)$ is the $\theta$-corners process defined in Eqn.~\eqref{eq_thetacorners}, the $\frac{N(N-1)}{2}$--dimensional integral is over $k$-tuples $y^k=(y^k_1,\dots,y^k_k)$, for $k=1,2,\dots,N-1$, and the conditions $y^i\prec y^{i+1}$ mean that $y^{i+1}_1\le y^{i}_1\le y^{i+1}_2\le\dots y^{i+1}_i\le y^{i}_i\le y^{i+1}_{i+1}$.

\subsection{Bessel generating functions}

Following \cite{BCG}, let $\M_N$ be the set of Borel probability measures on ordered real $N$-tuples $a_1\le\dots\le a_N$ and, for any $R>0$, let $\M_N^R\subset\M_N$ be the subset of measures $\mu$ such that
\begin{equation}\label{eq:technical}
\int_{a_1\le\dots\le a_N} e^{NR\max_i|a_i|}\,\mu(\d a_1,\dots,\d a_N) < \infty.
\end{equation}

\begin{proposition-definition}[Lemma~2.9 of~\cite{BCG}]\label{def:bgf}
    If $\mu\in\M_N^R$ for some $R>0$, then the integral
    \[
    G_\theta(x_1,\dots,x_N;\mu) := \int_{ a_1\le\dots\le a_N} B_{( a_1,\dots, a_N)}(x_1,\dots,x_N;\theta)\mu(\d a_1,\dots,\d a_N)
    \]
    defines a holomorphic function in the domain
    \[
    \{ (x_1,\dots,x_N)\in\C^N \colon\ |\Re x_i|<R,\ i=1,2,\dots,N \},
    \]
    that is called the \textbf{Bessel generating function} of $\mu$.
\end{proposition-definition}

The measures in $\M_N^R$ will be called \emph{$R$-exponentially decaying}.
We also let $\M_N^{>0} := \bigcup_{R>0}{\M_N^R}$, and call all measures in $\M_N^{>0}$ \emph{exponentially decaying}.
Any $\mu\in\M_N^{>0}$ admits a Bessel generating function $G_\theta(x_1,\dots,x_N;\mu)$ that is a holomorphic function in a neighborhood of the origin.
By the normalization and symmetry of $B_{(a_1,\dots,a_N)}(x_1,\dots,x_N;\theta)$, it follows that
\[
G_\theta(0^N;\mu)=1,
\]
and $G_\theta(x_1,\dots,x_N;\mu)$ is symmetric with respect to $x_1,\dots,x_N$.

\begin{proposition}[Prop.~2.11 from~\cite{BCG}]\label{prop:action_dunkl}
    Let $s\ge 1$, $k_1,\dots,k_s\ge 1$ be any integers, and let $\mu\in\M_N^{>0}$. Then
    \[
    \left( \prod_{i=1}^s \P_{k_i} \right) G_\theta(x_1,\dots,x_N;\mu) \Big|_{x_1=\dots=x_N=0} = \mathbb{E}_\mu\left[ \prod_{i=1}^s \left( \sum_{j=1}^N (a_j)^{k_i} \right) \right],
    \]
    where, in the right hand side, $a_1\le\dots\le a_N$ is $\mu$-distributed.
\end{proposition}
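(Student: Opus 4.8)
\section*{Proof proposal for Proposition~\ref{prop:action_dunkl}}

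The plan is to reduce the statement to two ingredients: (i) the eigenfunction property $\P_k B_{(a_1,\dots,a_N)}(\vecx;\theta)=\big(\sum_{i=1}^N a_i^k\big)B_{(a_1,\dots,a_N)}(\vecx;\theta)$ supplied by Theorem~\ref{thm:bessel_existence}, and (ii) the legitimacy of differentiating the defining integral $G_\theta(\vecx;\mu)=\int B_a(\vecx;\theta)\,\mu(\d a)$ under the integral sign, so that each operator $\P_{k_i}$ may be passed inside. Granting (ii), one applies $\prod_{i=1}^s\P_{k_i}$ termwise, uses (i) to replace each $\P_{k_i}$ by the scalar $\sum_j a_j^{k_i}$, sets $\vecx=(0^N)$, and invokes $B_a(0^N;\theta)=1$ to obtain $\int\prod_{i=1}^s\big(\sum_j a_j^{k_i}\big)\,\mu(\d a)=\E_\mu\big[\prod_i\sum_j a_j^{k_i}\big]$, which is the claim.

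The analytic heart is therefore (ii). First I would record a crude pointwise bound on the MBF on a complex polydisk: fixing $\rho>0$, the integral representation~\eqref{eq_besselbranching}, together with the fact that all the interlacing coordinates $y^{(k)}_i$ lie in $[a_1,a_N]$ and that $\int\Lambda_\theta(\vec y;a)\,\d\vec y=1$, gives, for $a_1<\dots<a_N$ and $\max_i|\Re x_i|\le\rho$,
\[
|B_a(\vecx;\theta)| \;\le\; \exp\big( C_N\,\rho\,\max_i|a_i|\big)
\]
for an explicit constant $C_N$ depending only on $N$; by continuity of the entire function $a\mapsto B_a(\vecx;\theta)$ this extends to the whole closed Weyl chamber (equivalently, one may cite the corresponding estimates in~\cite{BCG,Cue}). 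Cauchy's estimates on a slightly smaller polydisk then bound each partial derivative $\partial^\alpha B_a(\vecx;\theta)$ near $(0^N)$ by $C_{\alpha,\rho}\exp(C_N\rho\max_i|a_i|)$. Choosing $\rho$ small enough that $C_N\rho<R$, this dominating function — and, more generally, its product with any fixed polynomial in $\max_i|a_i|$ — is $\mu$-integrable for $\mu\in\M_N^R$, since $\mu\in\M_N^R$ forces $\int(\max_i|a_i|)^K e^{C_N\rho\max_i|a_i|}\,\mu(\d a)<\infty$ for every $K$. Dominated convergence then licenses differentiation under the integral sign in a neighborhood of $(0^N)$, and shows that every function produced along the way is holomorphic there.

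I would then run an induction on $s$. The subtlety is that $\P_k=\sum_i\D_i^{\,k}$ is a differential operator whose coefficients $\tfrac{1}{x_i-x_j}$ are singular precisely on the diagonals through $(0^N)$, even though $\P_k$ maps holomorphic symmetric functions on symmetric domains to holomorphic symmetric functions. To sidestep this, I would first establish the identity
\[
\Big(\prod_{i=1}^s\P_{k_i}\Big)G_\theta(\vecx;\mu) \;=\; \int\prod_{i=1}^s\Big(\sum_{j=1}^N a_j^{k_i}\Big)\,B_a(\vecx;\theta)\,\mu(\d a)
\]
on the complement of the diagonals, where $\P_{k_i}$ genuinely acts as a differential operator with locally holomorphic coefficients: the inductive step applies $\P_{k_s}$ to the holomorphic function $\int\prod_{i<s}\big(\sum_j a_j^{k_i}\big)B_a(\vecx;\theta)\,\mu(\d a)$, moves it inside the integral (justified as above, the extra polynomial factor not spoiling integrability for small $\rho$), and uses Theorem~\ref{thm:bessel_existence}. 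Since both sides of the displayed identity are holomorphic on a full neighborhood of $(0^N)$ and agree on the dense open complement of the diagonals, they agree at $(0^N)$; evaluating there and using $B_a(0^N;\theta)=1$ finishes the proof. The main obstacle is exactly this interplay between the integral and the operators $\P_{k_i}$: one must simultaneously control, uniformly in $a$ and on a complex neighborhood of the origin, all MBF derivatives appearing, and handle the apparent diagonal singularities of the Dunkl operators by the holomorphic-extension argument above. Everything else — the eigenfunction relation, the normalization $B_a(0^N;\theta)=1$, and tracking which reweighted integrands keep exponential decay — is routine.
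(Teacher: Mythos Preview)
The paper does not give its own proof of this proposition; it is quoted verbatim as Prop.~2.11 from~\cite{BCG} and used as a black box. So there is no in-paper argument to compare against.

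Your proposal is correct and is essentially the standard proof one would expect in~\cite{BCG}: pass the operators $\P_{k_i}$ through the integral by dominated convergence, invoke the eigenfunction relation of Theorem~\ref{thm:bessel_existence}, and evaluate at the origin using $B_a(0^N;\theta)=1$. Your handling of the two genuine issues---(1) the uniform-in-$a$ exponential bound on $B_a$ and its derivatives via the integral representation~\eqref{eq_besselbranching} plus Cauchy estimates, matched against the decay built into $\M_N^R$, and (2) the apparent diagonal singularities of the Dunkl pieces, dealt with by working off the diagonals and extending by holomorphy---is the right outline. One minor sharpening: from~\eqref{eq_besselbranching} the constant you call $C_N$ is of order $N^2$ (each factor $\big|\sum_i y_i^{(k)}-\sum_j y_j^{(k-1)}\big|$ is bounded by $(2k-1)\max_i|a_i|$), so you need $\rho$ on the scale $R/N$ rather than $R$ to get $C_N\rho<NR$; this does not affect the argument since any neighborhood of the origin suffices.
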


\section{Statement of the main theorem}

Let $\{\mu_N\in\M_N\}_{N\ge 1}$ be a sequence of measures on real $N$-tuples $a=(a_1<\cdots<a_N)$.
The random variables of interest are
\[
p_k^N := \sum_{i=1}^N{\left(\frac{a_i}{N}\right)^k},\qquad k=1,2,\cdots.
\]

\begin{definition}\label{def:lln}
    We say that $\{\mu_N\in\M_N\}_{N\ge 1}$ \textbf{satisfies the fixed temperature LLN} if there exists a sequence of real numbers $\{m_k\}_{k\ge 1}$ such that
    \[
        \lim_{N\to\infty}{ \mathbb{E}_{\mu_N}\left[ \frac{1}{N^s}\prod_{j=1}^s{ p^N_{k_j}(a) } \right] } = \prod_{j=1}^s{m_{k_j}},
    \]
    for all integers $s\ge 1$, and $k_1,\dots,k_s\ge 1$.
\end{definition}

Assume that $\mu_N\in\M_N^{>0}$ and let
\[
G_{N,\theta}(\vecx) = \int_{a_1\le\cdots\le a_N}{B_{(a_1,\dots,a_N)}(\vecx;\theta)\mu_N(\d a_1,\dots,\d a_N)}
\]
be the Bessel generating function of $\mu_N$, for all $N\ge 1$.
Since $G_{N,\theta}(0^N)=1$, then $\ln{G_{N,\theta}(\vecx)}$ defines a holomorphic function in a neighborhood of $(0^N)\in\C^N$.
In particular, this means that each $\ln{G_{N,\theta}(\vecx)}$ has a Taylor expansion around $(0^N)$ of the form
\begin{equation}\label{eq:taylor_1}
\ln{G_{N,\theta}(\vecx)} = \sum_{\nu\colon\ell(\nu)\le N}{ b_\nu^N M_\nu(\vecx) },
\end{equation}
where the sum is over partitions $\nu$ of length $\ell(\nu)\le N$, and $M_\nu(\vecx)$ are the monomial symmetric polynomials.
The Taylor coefficients $b_\nu^N$ can be calculated as
\begin{equation}\label{eq:explicit_taylor}
    b_\nu^N = \frac{\partial^{|\nu|}}{\partial x_1^{\nu_1}\cdots \partial x_{\ell(\nu)}^{\nu_{\ell(\nu)}}}{\,\ln{G_{N,\theta}(\vecx)}}\bigg|_{\setzeroes}.
\end{equation}
To state the main theorem, we need to expand $\ln{G_{N,\theta}(\vecx)}$ in the power sum basis $\{p_\la(\vecx)\}_{\la\colon\ell(\la)\le N}$, where
\[
p_\la(\vecx):=\prod_{i=1}^{\ell(\la)}{p_{\la_i}(\vecx)}.
\]
Since one can write $M_{\nu}(\vecx)$ as a unique linear combination of power sums if  and only if $|\nu|\le N$, such expansion is well-defined only up to degree $N$.

\begin{definition}\label{def:appropriate}
    The sequence $\{\mu_N\in\M_N^{>0}\}_{N\ge 1}$ \textbf{is $\theta$-LLN-appropriate} if the corresponding Bessel generating functions $G_{N,\theta}$ are such that, if we expand $\ln{G_{N,\theta}(\vecx)}$ up to degree $N$,
    \begin{equation}\label{eq:taylor_2}
    \ln{G_{N,\theta}(\vecx)} = \sum_{\la\colon|\la|\le N}{ a_\la^N p_\la(\vecx) }+O\big(\|x\|^{N+1}\big),
    \end{equation}
    then there exists a sequence of real numbers $\{\kappa_d\}_{d\ge 1}$ such that
    \begin{enumerate}[label=(\alph*)]
        \item $\displaystyle\lim_{N\to\infty}{\frac{a_{(d)}^N}{N}} = \frac{\theta^{1-d}}{d}\cdot\kappa_d$, for all $d\in\Z_{\ge 1}$,
        \item $\displaystyle\lim_{N\to\infty}{\frac{a_\la^N}{N^{\ell(\la)}}}=0$, whenever $\ell(\la)\ge 2$.
    \end{enumerate}
\end{definition}

\medskip

\begin{definition}
    Let $k\ge 1$ be an arbitrary integer. A \textbf{Łukasiewicz path of length $k$} is a lattice path $\Gamma$ that starts at $(0,0)$, ends at $(k,0)$, has steps of the form $(1,d)$, for some $d\in\{-1,0,1,2,\dots\}$, and stays in the first quadrant, i.e.~does not contain vertices with negative $y$-coordinates. We denote the set of all Łukasiewicz paths of length $k$ by $\mathbf{L}(k)$.
\end{definition}

\begin{theorem}[Main Theorem]\label{thm:main}
    Let $\theta>0$ be fixed.
    The sequence $\{\mu_N\}_{N\ge 1}$ satisfies the fixed temperature LLN (according to Definition~\ref{def:lln}) if and only if it is $\theta$-LLN-appropriate (according to Definition~\ref{def:appropriate}). In that case, the sequences $\{m_k\}_{k\ge 1}$ and $\{\kappa_d\}_{d\ge 1}$ from Definitions \ref{def:lln} and \ref{def:appropriate} are uniquely determined by each other, according to the equalities:
    \begin{equation}\label{eq_fixedtempmoment-cumulant}
        m_k = \sum_{\Gamma\in\Luk(k)}{ \prod_{d\ge 1}{(\kappa_d)^{\#\text{steps $(1,d-1)$ of }\Gamma}} },\text{ for all }k\ge 1.
    \end{equation}
\end{theorem}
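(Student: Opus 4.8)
The plan is to prove the two implications separately, since the paper signals that distinct tools are used in each direction, and then to extract the moment--free cumulant formula \eqref{eq_fixedtempmoment-cumulant} as a by-product of the ``if'' direction's computation.

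\medskip

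\noindent\textbf{The ``if'' direction (LLN-appropriate $\Rightarrow$ fixed-temperature LLN).} The idea is to use Proposition~\ref{prop:action_dunkl} to write $\E_{\mu_N}[N^{-s}\prod_j p^N_{k_j}]$ as $N^{-s}$ times $\big(\prod_i \P_{k_i}\big)G_{N,\theta}\big|_{\vecx=0}$, rescaled appropriately. First I would establish that $\P_k$ is, up to lower-order corrections, a ``nice'' differential operator when applied to $\exp(\ln G_{N,\theta})$: writing $G_{N,\theta}=\exp(\sum_\la a^N_\la p_\la)$, one computes how each $\D_i$ acts on a product of power sums, keeping track of the leading power of $N$. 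The key algebraic fact to isolate is a Leibniz-type expansion showing that applying $\prod_i \P_{k_i}$ and setting $\vecx=0$ produces a sum indexed by ways of ``decorating'' the $k_i$'s, where the dominant contribution (of order $N^s$) factorizes across the $s$ indices --- this is precisely why the limit is $\prod_j m_{k_j}$ and not something with genuine correlations. Within a single factor $\P_k$, the surviving terms should be organized so that each corresponds to a Lukasiewicz path of length $k$: a step $(1,d-1)$ records an application that pulls down a factor $a^N_{(d)}\sim N\theta^{1-d}\kappa_d/d$, while the combinatorial constraint that the path stays in the first quadrant and returns to $0$ encodes which sequences of Dunkl-operator actions are nonvanishing at $\vecx=0$. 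Carefully tracking the powers of $\theta$ coming from the $\theta$-dependent term of $\D_i$ and the $1/d$ from differentiating $p_d$, the constants should combine to exactly cancel, yielding \eqref{eq_fixedtempmoment-cumulant}. Condition (b) of Definition~\ref{def:appropriate} is used to discard all terms where a non-power-sum-indexed (i.e. $\ell(\la)\ge 2$) coefficient $a^N_\la$ appears with too few compensating factors of $N$.

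\medskip

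\noindent\textbf{The ``only if'' direction (fixed-temperature LLN $\Rightarrow$ LLN-appropriate).} Here the plan is to invoke the Chapuy--Dolega topological expansion \cite{CD}. I would first identify $G_{N,\theta}$ with a specialization of their $\theta$-deformed tau-function (after matching normalizations and the role of the spectral data $a_1,\dots,a_N$ versus their formal parameters), so that $\ln G_{N,\theta}$ is expressed as a generating series over constellations --- maps on orientable and non-orientable surfaces --- weighted by $\theta$ according to a suitable ``non-orientability'' measure. The coefficients $a^N_\la\big|_{\vecx=0}$ then acquire a combinatorial interpretation: $a^N_{(d)}$ counts (weighted) constellations with one face-type contributing a single cycle of length $d$, while $a^N_\la$ for $\ell(\la)\ge 2$ counts those with $\ell(\la)$ such cycles. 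The genus/Euler-characteristic grading of the expansion dictates the power of $N$: a connected constellation of Euler characteristic $\chi$ contributes $N^{\chi}$ (or $N^{\chi}$-type scaling after the rescaling $a_i\mapsto a_i/N$), so the maximal power of $N$ accompanying a term with $\ell$ marked cycles is $N^{\ell}$, achieved exactly on the planar (genus-zero, orientable) connected pieces. This immediately gives that $a^N_\la/N^{\ell(\la)}$ has a finite limit for every $\la$; the LLN hypothesis, transported through Proposition~\ref{prop:action_dunkl} and the ``if''-direction dictionary run in reverse on the leading terms, pins down $\lim a^N_{(d)}/N = \theta^{1-d}\kappa_d/d$ and forces $\lim a^N_\la/N^{\ell(\la)}=0$ for $\ell(\la)\ge2$, because a genuine nonzero limit there would manifest as a nonzero connected correction to the factorized moment formula, contradicting Definition~\ref{def:lln}.

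\medskip

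\noindent\textbf{Main obstacle.} I expect the hardest part to be making the ``if'' direction's bookkeeping rigorous: one must show that after applying $\prod_{i=1}^s\P_{k_i}$ to $\exp(\sum a^N_\la p_\la)$ and evaluating at $\vecx=0$, (i) the terms of order $N^s$ are exactly those in which the $s$ Dunkl blocks act independently, each producing a Lukasiewicz-path sum, and (ii) all other terms --- in particular those mixing blocks, or using $a^N_\la$ with $\ell(\la)\ge2$, or using the singular $\theta(x_i-x_j)^{-1}(1-\sigma_{ij})$ part of $\D_i$ in a way that could a priori blow up --- are genuinely $o(N^s)$. The singular part of the Dunkl operator is delicate because $(x_i-x_j)^{-1}(1-\sigma_{ij})$ applied to a symmetric analytic function is analytic, but iterated applications generate many terms whose $N$-degree must be bounded uniformly; controlling this likely requires an inductive estimate on the $N$-degree of $\big(\prod\P_{k_i}\big)G_{N,\theta}$ phrased in the language introduced in the ``Conventions'' subsection. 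A secondary subtlety is the uniqueness claim at the end: that \eqref{eq_fixedtempmoment-cumulant} is invertible, i.e. $\{m_k\}$ determines $\{\kappa_d\}$ and vice versa --- this follows because the Lukasiewicz-path sum for $m_k$ has leading term $\kappa_k$ (the single step $(1,k-1)$ followed by $k$ down-steps, wait --- rather the path with one big up-step and $k$ unit down-steps contributes $\kappa_k$ times lower-order-in-previously-determined-$\kappa$'s), so the system is triangular and solvable recursively.
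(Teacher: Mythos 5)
Your proposal is correct and follows essentially the same route as the paper: the ``if'' direction via Proposition~\ref{prop:action_dunkl} and a careful expansion of iterated Dunkl actions on $\exp\big(\sum_\la a_\la^N p_\la\big)$ whose surviving terms are indexed by Lukasiewicz paths (the paper implements your ``bookkeeping'' via truncation and symmetrization lemmas and a classification of the elementary operators into types, discarding exactly the contributions you flag as $o(N^s)$), and the ``only if'' direction via the Chapuy--Dolega constellation expansion with the Euler-characteristic grading controlling the powers of $N$. The only point you leave implicit --- that passing from $\ln B_a$ to $\ln\E[B_a]$ and showing $\lim a^N_\la/N^{\ell(\la)}=0$ for $\ell(\la)\ge 2$ requires the LLN-induced cancellation between $\E[p_{\la_1}(a)p_{\la_2}(a)]$ and $\E[p_{\la_1}(a)]\E[p_{\la_2}(a)]$ in the exp--expectation--log algorithm --- is exactly the mechanism the paper makes precise, so your plan is sound as stated.
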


The coefficients $a_\la^N$ in the expansion~\eqref{eq:taylor_2} can be expressed in terms of $\ln{G_{N,\theta}(\vecx)}$.
This follows from the explicit expression for the monomial symmetric polynomials $M_\mu(\vecx)$ in terms of the power-sum basis $p_\la(\vecx)$ (see, e.g., \cite[Prop.~2.13]{Z}), as well as equations \eqref{eq:taylor_1}, \eqref{eq:explicit_taylor} and \eqref{eq:taylor_2}.
Explicitly,
\begin{equation}\label{eq:a_coeffs}
a_\la^N = \frac{(-1)^{\ell(\la)-\ell(\nu)}}{\prod_{j\ge 1}{j^{m_j(\la)}m_j(\la)!}} \sum_{\big( \kappa^{(1)},\dots,\kappa^{(\ell(\nu))} \big)} \prod_{i=1}^{\ell(\nu)} \frac{(\ell(\kappa^{(i)})-1)!\,\nu_i}{\prod_{j\ge 1}{m_j(\kappa^{(i)})!}}\cdot
\frac{\partial^{|\nu|}}{\partial x_1^{\nu_1}\cdots x_{\ell(\nu)}^{\nu_{\ell(\nu)}}}\ln{G_{N,\theta}(\vecx)}\bigg|_{\setzeroes},
\end{equation}
where the sum is over all sequences of partitions with $|\kappa^{(i)}|=\nu_i$ and $\kappa^{(1)}\cup\cdots\cup\kappa^{(\ell(\nu))}=\la$.

In this way, conditions (a)-(b) in Definition~\ref{def:appropriate} can be rewritten in terms of the Taylor coefficients of $\ln{G_{N,\theta}(\vecx)}$.
As a result, Theorem~\ref{thm:main} answers the question posed in \cite{BCG} of finding necessary and sufficient conditions for a sequence $\{\mu_N\}_{N\ge 1}$ to satisfy the fixed temperature LLN in terms of the corresponding Bessel generating functions.
Moreover, equation~\eqref{eq:a_coeffs} coincides with the preliminary calculations in that appendix.
For example, for the two partitions of size $2$, we have
\[
a_{(2)}^N = \left\{ \frac{\partial^2}{\partial x_1^2} - \frac{\partial^2}{\partial x_1\partial x_2} \right\}{\ln{G_{N,\theta}(\vecx)}}\bigg|_{\setzeroes}, \qquad
a_{(1,1)}^N = \frac{\partial^2}{\partial x_1\partial x_2}{\ln{G_{N,\theta}(\vecx)}}\bigg|_{\setzeroes},
\]
and for the three partitions of size $3$, we have
\begin{align*}
a_{(3)}^N &= \left\{ \frac{1}{2}\cdot\frac{\partial^3}{\partial x_1^3} - \frac{3}{2}\cdot\frac{\partial^3}{\partial x_1^2\partial x_2} + \frac{\partial^3}{\partial x_1\partial x_2\partial x_3} \right\}{\ln{G_{N,\theta}(\vecx)}}\bigg|_{\setzeroes},\\
a_{(2,1)}^N &= \left\{ \frac{\partial^3}{\partial x_1^2\partial x_2} - \frac{\partial^3}{\partial x_1\partial x_2\partial x_3} \right\}{\ln{G_{N,\theta}(\vecx)}}\bigg|_{\setzeroes},\\
a_{(1,1,1)}^N &= \frac{\partial^3}{\partial x_1\partial x_2\partial x_3} {\ln{G_{N,\theta}(\vecx)}}\bigg|_{\setzeroes}.
\end{align*}

\begin{example}
     Let $a(N)=(a,\dots,a)$, for all $N\ge 1$, where $a$ is a Gaussian random variable of mean $0$ and variance $N$. Since $B_{(a,\dots,a)}(\vecx;\theta)=\exp\left(a\sum_{i=1}^N{x_i}\right)$ (by taking a limit of Eqn.~\eqref{eq_besselbranching}), we have  
    \begin{multline*}
        G_{N,\theta}(\vecx) = \int_{-\infty}^{\infty}{\frac{e^{-\frac{a^{2}}{2N}}}{\sqrt{2\pi N}}\cdot\exp\left(a\sum_{i=1}^{N}x_{i}\right)da}\\
        = \exp\left(\frac{N}{2}p_{1}(\vecx)^{2}\right) = \exp\left(\frac{N}{2}M_{(2)}(\vecx)+NM_{(1,1)}(\vecx)\right).
    \end{multline*}
    One can check that $a(N)$ satisfies the fixed temperature LLN with $m_k=0$ for all $k$. This is revealed by the fact that $\lim_{N\rightarrow\infty}\frac{a^N_{(1,1)}}{N^2}=0$ and $a^N_\la=0$ for all $\la\ne (1,1)$. On the other hand, $G_{N,\theta}(\vecx)$ does not satisfy the conditions proposed by \cite[Claim 9.1]{BCG}, which are given by monomials of partial derivatives.
\end{example}

\section{Sufficient conditions for the LLN}\label{sec:sufficient}

In this section, we assume that the sequence $\{\mu_N\in\M_N^{>0}\}_{N\ge 1}$ is $\theta$-LLN-appropriate, in the sense that, if we let
\begin{equation}\label{eqn:suff_assumption_1}
F_{N,\theta}(\vecx) := \ln{G_{N,\theta}(\vecx)} = \sum_{\la\colon\ell(\la)\le N}{a_\la^N p_\la(\vecx)},
\end{equation}
then there exist $\kappa_1,\kappa_2,\dots\in\R$ such that conditions (a)-(b) from Definition~\ref{def:appropriate} are satisfied, i.e.
\begin{equation}\label{eqn:suff_assumption_2}
\lim_{N\to\infty}{\frac{a_{(d)}^N}{N}} = \frac{\theta^{1-d}\kappa_d}{d},\text{ for all } d\in\Z_{\ge 1},\qquad\quad
\lim_{N\to\infty}{\frac{a_\la^N}{N^{\ell(\la)}}}=0, \text{ if }\ell(\la)\ge 2.
\end{equation}

\medskip

The goal of this section is to prove that the numbers $m_1,m_2,\dots\in\R$, defined by equation~\eqref{eq_fixedtempmoment-cumulant}, satisfy the conditions of Definition~\ref{def:lln}.
By virtue of Proposition~\ref{prop:action_dunkl}, this is equivalent to proving
\begin{equation}\label{eqn:goal_section_4}
\lim_{N\to\infty}{ N^{-|\la|-\ell(\la)}\cdot \left( \prod_{i=1}^{\ell(\la)} \P_{\la_i} \right) e^{F_{N,\theta}(\vecx)} \,\bigg|_{\vecx=(0^N)} } = \prod_{i=1}^{\ell(\la)}{m_{\la_i}},
\end{equation}
for all partitions $\la$, where 
\begin{equation}\label{eq:luk_expression}
m_s := \sum_{\Gamma\in\Luk(s)}{ \prod_{d\ge 1}{(\kappa_d)^{\#\text{steps $(1,d-1)$ of }\Gamma}} },\text{ for all } s\in\Z_{\ge 1}.
\end{equation}

\subsection{Technical lemmas on the action of Dunkl operators}

Define the $k$-truncated polynomial
\[
F^{(k)}_{N,\theta}(\vecx) := \sum_{\la\colon |\la|\le k,\ \ell(\la)\le N}{a_\la^N p_\la(\vecx)}.
\]

\begin{lemma}\label{lem:tech_1}
    Let $1\le j_1,\dots,j_r\le N$ be arbitrary, and let $k=j_1+\dots+j_r$. Then
    \[
    \left[ \prod_{i=1}^r\D_{j_i} \right] e^{F_{N,\theta}(\vecx)} \Big|_{\zeroes} = \left[ \prod_{i=1}^r\D_{j_i} \right] e^{F^{(k)}_{N,\theta}(\vecx) } \Big|_{\zeroes}.
    \]
\end{lemma}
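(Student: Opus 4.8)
The plan is to track how many derivatives and how many of the "mixing" terms $\frac{1}{x_i-x_j}(1-\sigma_{i,j})$ each Dunkl operator can contribute, and then argue that any monomial $p_\la(\vec x)$ appearing in $F_{N,\theta}$ with $|\la|>k$ contributes nothing after applying $\prod_{i=1}^r\D_{j_i}$ and evaluating at $\vec x=(0^N)$. Since $e^{F_{N,\theta}}=e^{F^{(k)}_{N,\theta}}\cdot e^{F_{N,\theta}-F^{(k)}_{N,\theta}}$ and the second factor equals $1$ at the origin with all partial derivatives of order $\le k$ vanishing there (as $F_{N,\theta}-F^{(k)}_{N,\theta}$ is a sum of $p_\la$ with $|\la|\ge k+1$, hence vanishes to order $\ge k+1$ at the origin), it suffices to show that $\prod_{i=1}^r\D_{j_i}$ is, when evaluated at the origin, a differential-type operator of total order $\le k=j_1+\dots+j_r$ — i.e. it only probes the Taylor jet of its argument up to order $k$. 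Then applying the product rule and using that every term with at least one derivative hitting $e^{F_{N,\theta}-F^{(k)}_{N,\theta}}$ dies, we get the claim.

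The key point to establish is therefore: \emph{after clearing denominators}, $\D_i$ acts on symmetric holomorphic functions as $\partial_i$ plus an operator that lowers the order of vanishing at the origin by at most $1$. Concretely, the operator $g\mapsto \frac{g-\sigma_{i,j}g}{x_i-x_j}$ sends a function vanishing to order $m$ at the origin to one vanishing to order $m-1$; this is a standard divided-difference estimate — write $g-\sigma_{i,j}g$ in terms of its Taylor expansion and observe the antisymmetry in $x_i\leftrightarrow x_j$ forces divisibility by $(x_i-x_j)$ with the quotient losing exactly one degree. Hence each $\D_{j_i}$ lowers the vanishing order by at most $1$, so $\prod_{i=1}^r\D_{j_i}$ lowers it by at most $r\le k$; but more carefully, I want the sharper bookkeeping that matches the \emph{weights} $j_i$, not just the count $r$. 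For this I would instead expand $\prod_{i=1}^r \D_{j_i}$ directly: each factor $\D_{j_i}=\partial_{j_i}+\theta\sum_{\ell\ne j_i}\frac{1}{x_{j_i}-x_\ell}(1-\sigma_{j_i,\ell})$ is a first-order object, so the product is a sum of terms each of which, evaluated at the origin against a function vanishing to order $m$, requires $m\le r\le k$. Actually $r\le k$ always since each $j_i\ge 1$, so the crude bound already gives order $\le k$, which is all that is needed.

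So the actual proof is short: (1) decompose $e^{F_{N,\theta}}=e^{F^{(k)}_{N,\theta}}e^{H}$ with $H:=F_{N,\theta}-F^{(k)}_{N,\theta}$ vanishing to order $\ge k+1$ at the origin (since it is a sum of $p_\la$ with $|\la|\ge k+1$); consequently so does $e^H-1$, i.e. all partial derivatives of $e^H$ of total order $\le k$ vanish at $(0^N)$, while $e^H|_{(0^N)}=1$. (2) Show $\prod_{i=1}^r\D_{j_i}$, when evaluated at $(0^N)$, annihilates any function vanishing to order $\ge r+1$ at the origin; this follows by induction on $r$ from the divided-difference fact above, since $\D_{j}$ maps "vanishing order $\ge m+1$" into "vanishing order $\ge m$", and a function of vanishing order $\ge 1$ is killed at the origin. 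Since $r\le k$, such operators see only the $k$-jet of their argument. (3) Apply the (generalized Leibniz) product rule for the operator $\prod\D_{j_i}$ to $e^{F^{(k)}_{N,\theta}}\cdot e^H$: every term in which at least one derivative (or divided difference) falls on the $e^H$ factor produces, after evaluation at $(0^N)$, a factor that is a partial-derivative combination of $e^H$ of total order $\le k$ hitting $e^H$ — hence $0$ by step (1) — unless that combination is the identity, which is impossible once at least one operator acts on $e^H$; the sole surviving term is the one where all of $\prod\D_{j_i}$ acts on $e^{F^{(k)}_{N,\theta}}$, which gives exactly the right-hand side. The only mild obstacle is writing the Leibniz rule cleanly for the non-commutative-looking but in fact commuting operators $\D_{j_i}$ on symmetric functions, and checking that the $\sigma_{i,j}$ swaps do not spoil the vanishing-order bookkeeping — but $\sigma_{i,j}$ preserves the order of vanishing at $(0^N)$ since it fixes the origin, so this is routine.
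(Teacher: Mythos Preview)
Your core argument is correct and coincides with the paper's: each $\D_j$ lowers the order of vanishing at the origin by at most one (the divided-difference fact), so the product of $r$ such operators only sees the $r$-jet, and since $r\le k$ and $e^{F_{N,\theta}}-e^{F^{(k)}_{N,\theta}}$ vanishes to order $\ge k+1$, the two sides agree at the origin. The paper proves this by first observing that the degree-$k$ truncation in the power-sum basis equals the degree-$k$ truncation in the monomial basis and then invoking exactly this degree-counting argument from \cite[Lemma~5.2]{BCG}.

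There is, however, a genuine wrinkle in your step~(3). Dunkl operators do \emph{not} satisfy the ordinary Leibniz rule: for general $f,g$ one has
\[
\D_j(fg)=(\D_j f)\,g + f\,(\partial_j g) + \theta\sum_{\ell\ne j}(\sigma_{j,\ell}f)\,\frac{g-\sigma_{j,\ell}g}{x_j-x_\ell},
\]
so the expansion of $\prod_i\D_{j_i}\bigl(e^{F^{(k)}_{N,\theta}}\cdot e^H\bigr)$ does not split cleanly into ``all operators on one factor'' plus ``at least one on the other''; after the first step the factors are no longer symmetric and the cross terms proliferate. Fortunately the Leibniz detour is entirely unnecessary: your steps~(1) and~(2) already finish the proof by \emph{linearity}. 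Since $e^{F_{N,\theta}}-e^{F^{(k)}_{N,\theta}}=e^{F^{(k)}_{N,\theta}}(e^H-1)$ and $e^H-1$ vanishes to order $\ge k+1$, the difference itself vanishes to order $\ge k+1$; then step~(2) gives $\bigl[\prod_i\D_{j_i}\bigr]\bigl(e^{F_{N,\theta}}-e^{F^{(k)}_{N,\theta}}\bigr)\big|_{\vec x=(0^N)}=0$ directly. Drop step~(3) and you have a clean proof.
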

\begin{proof}
    Let $M_\mu(\vecx)$ denote the monomial symmetric polynomials.
    Both sets $\{p_\la(\vecx)\}_{\la\colon\ell(\la)\le N}$ and $\{M_\mu(\vecx)\}_{\mu\colon\ell(\mu)\le N}$ are homogeneous bases of the ring of symmetric polynomials with real coefficients in the variables $x_1,\dots,x_N$, so we can write $F_{N,\theta}(\vecx)$ in terms of $M_\mu(\vecx)$.
    Then the truncation $F_{N,\theta}^{(k)}(\vecx)$ is the result of ignoring the $M_\mu(\vecx)$ with $|\mu|>k$.
    With this switch to monomial symmetric polynomials, the lemma follows exactly from the same argument as in \cite[Lemma~5.2]{BCG}.
\end{proof}

\begin{lemma}\label{lem:tech_2}
    If for all partitions $\mu$, we have the limit
    \begin{equation}\label{eq:assumption_tech_2}
    \lim_{N\to\infty}{ N^{-|\mu|}\left[ \prod_{i=1}^{\ell(\mu)}{\D_i^{\mu_i}} \right] e^{F_{N,\theta}(\vecx)} \bigg|_{\vecx=(0^N)} } = \prod_{i=1}^{\ell(\mu)}{m_{\mu_i}},
    \end{equation}
    then for all partitions $\la$, we have
    \begin{equation}\label{eq:conclusion_tech_2}
    \lim_{N\to\infty} N^{-|\la|-\ell(\la)}\left[ \prod_{i=1}^{\ell(\la)}{\P_{\la_i}} \right] e^{F_{N,\theta}(\vecx)} \bigg|_{\vecx=(0^N)} = \prod_{i=1}^{\ell(\la)}{m_{\la_i}}.
    \end{equation}
\end{lemma}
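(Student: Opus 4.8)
The plan is to pass from the operators $\P_k = \sum_{i=1}^N \D_i^k$ to individual products of Dunkl operators $\prod \D_i^{\mu_i}$ acting on distinct variables, and then invoke the hypothesis~\eqref{eq:assumption_tech_2}. First I would expand each $\P_{\la_i}$ as a sum over $i_1,\dots,i_{\ell(\la)}$ of monomials $\prod_{t} \D_{i_t}^{\la_t}$. The resulting terms split according to the coincidence pattern among the indices $i_1,\dots,i_{\ell(\la)}$: terms where all indices are distinct, and terms where at least two indices coincide. For the ``all distinct'' terms, after using the symmetry of $e^{F_{N,\theta}}$ in the $x$-variables (so that the value at $\zeroes$ only depends on the partition, not on which particular distinct indices are chosen), each such term contributes the same quantity, namely $N(N-1)\cdots(N-\ell(\la)+1)$ copies of $\big[\prod_i \D_i^{\la_i}\big] e^{F_{N,\theta}}\big|_{\zeroes}$ — but one must be careful: distinct parts $\la_i$ that are equal give rise to overcounting, so the precise combinatorial factor is $N^{\underline{\ell(\la)}}/\prod_j m_j(\la)!$ times a sum over set-partition-free assignments; in any case this is $N^{\ell(\la)}(1+o(1))$ times the quantity appearing in~\eqref{eq:assumption_tech_2}, which by hypothesis is $N^{|\la|}(1+o(1))\prod_i m_{\la_i}$. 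So the ``all distinct'' contribution, after dividing by $N^{|\la|+\ell(\la)}$, converges to $\prod_i m_{\la_i}$ as desired.

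The remaining work is to show the ``coinciding indices'' terms are negligible after dividing by $N^{|\la|+\ell(\la)}$. If indices coincide, the number of free indices drops: a term with the indices falling into $c < \ell(\la)$ distinct values contributes at most $O(N^c)$ choices of indices. So it suffices to show that for each fixed coincidence pattern, the single quantity $\big[\prod_{t}\D_{i_t}^{\la_t}\big] e^{F_{N,\theta}}\big|_{\zeroes}$ — which, grouping the operators acting on the same variable, has the form $\big[\prod_{u=1}^{c}\D_u^{\rho_u}\big] e^{F_{N,\theta}}\big|_{\zeroes}$ for a composition $\rho$ of $|\la|$ into $c$ parts — is $O(N^{|\la|})$. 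Reordering $\rho$ into a partition $\mu$ with $\ell(\mu)=c$, this is exactly the quantity controlled by hypothesis~\eqref{eq:assumption_tech_2} (which gives it is $N^{|\mu|}(1+o(1))\prod m_{\mu_i} = N^{|\la|}(1+o(1))\prod m_{\mu_i}$, hence $O(N^{|\la|})$). Multiplying by the $O(N^c)$ index choices gives $O(N^{|\la|+c})$ with $c\le \ell(\la)-1$, so after dividing by $N^{|\la|+\ell(\la)}$ this vanishes in the limit.

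One technical point I would need to handle carefully: the hypothesis~\eqref{eq:assumption_tech_2} is stated for partitions $\mu$ with $\ell(\mu)\le N$, so I must make sure all compositions/partitions arising have length at most $N$ — this is automatic since $\ell(\mu)\le \ell(\la)$ and $N\to\infty$ with $\la$ fixed. A second point is that the Dunkl operators $\D_{i_t}$ do not simply reduce to $\partial$ when evaluated at $\zeroes$, because of the singular terms $\frac{1}{x_i-x_j}(1-\sigma_{ij})$; however, the operators $\D_i$ are well-defined on symmetric holomorphic functions (as noted after their definition), they pairwise commute, and their action followed by evaluation at $\zeroes$ is what appears on both sides, so no issue arises — I am only manipulating the operator identity $\prod\P_{\la_i} = \sum \prod \D_{i_t}^{\la_t}$ and then using symmetry of $e^{F_{N,\theta}}$ to reduce each summand to a ``canonical'' one. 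The main obstacle is purely bookkeeping: getting the combinatorial constant in the ``all distinct'' term exactly right when $\la$ has repeated parts, and making sure the $o(1)$ errors from~\eqref{eq:assumption_tech_2} aggregate correctly across the polynomially-many terms in the expansion of $\prod\P_{\la_i}$ (there are $N^{\ell(\la)}$ terms, but they are grouped into $O(1)$ coincidence patterns, within each of which the summands are identical by symmetry, so uniformity is not an issue).
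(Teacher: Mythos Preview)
Your approach is essentially the same as the paper's: expand $\prod_i \P_{\la_i}$ as a sum over index tuples $(j_1,\dots,j_{\ell(\la)})$, separate the all-distinct tuples from those with coincidences, use the symmetry of $e^{F_{N,\theta}}$ to reduce each class to a single canonical representative, and control the coinciding-index contributions via the hypothesis applied to a shorter partition.

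One correction: your hesitation about repeated parts of $\la$ is unwarranted. The expansion is a sum over \emph{ordered} tuples $(j_1,\dots,j_{\ell(\la)})\in\{1,\dots,N\}^{\ell(\la)}$, and for any tuple of pairwise distinct indices the symmetry of $e^{F_{N,\theta}}$ (together with commutativity of the $\D_i$) gives $\D_{j_1}^{\la_1}\cdots\D_{j_{\ell(\la)}}^{\la_{\ell(\la)}} e^{F_{N,\theta}}\big|_{\zeroes} = \D_1^{\la_1}\cdots\D_{\ell(\la)}^{\la_{\ell(\la)}} e^{F_{N,\theta}}\big|_{\zeroes}$, regardless of whether some $\la_i$ coincide. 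Hence the distinct-index contribution is exactly $N(N-1)\cdots(N-\ell(\la)+1)$ copies of the canonical term, with no division by $\prod_j m_j(\la)!$; your first instinct was right and no further bookkeeping is needed there.
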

\begin{proof}
Assume that Eqn.~\eqref{eq:assumption_tech_2} holds for all $\mu$. Let us take any partition $\la$ and prove Eqn.~\eqref{eq:conclusion_tech_2}.

If $\ell(\la)=1$, then $\la=(n)$, for some $n\in\Z_{\ge 1}$.
Note that $\D_i^n e^{F_{N,\theta}(\vecx)}$ differs from $\D_j^n e^{F_{N,\theta}(\vecx)}$ only by the change of variables $x_i\leftrightarrow x_j$ because of the symmetry of $F_{N,\theta}(\vecx)$; in particular,
\begin{equation}\label{eq:equality_1}
\D_i^n e^{F_{N,\theta}(\vecx)}\big|_{\vecx=(0^N)} = \D_j^n e^{F_{N,\theta}(\vecx)}\big|_{\vecx=(0^N)}.
\end{equation}
As a result,
\begin{equation*}
N^{-n-1}\,\P_n e^{F_{N,\theta}(\vecx)} \Big|_{\vecx=(0^N)} = N^{-n-1}\sum_{i=1}^n{\D_i^n} e^{F_{N,\theta}(\vecx)} \Big|_{\vecx=(0^N)} = N^{-n}\,\D_1^n e^{F_{N,\theta}(\vecx)}\Big|_{\vecx=(0^N)},
\end{equation*}
proving that the desired Eqn.~\eqref{eq:conclusion_tech_2} follows at once from Eqn.~\eqref{eq:assumption_tech_2} for $\mu=(n)$.

\smallskip

If $\ell(\la)=2$, then $\la=(n_1,n_2)$, for some positive integers $n_1\ge n_2$. As above, we deduce that $\D_i^{n_2}\D_j^{n_1} e^{F_{N,\theta}(\vecx)}\big|_{\vecx=(0^N)} = \D_2^{n_2}\D_1^{n_1} e^{F_{N,\theta}(\vecx)}\big|_{\vecx=(0^N)}$, whenever $i\ne j$.
Together with~\eqref{eq:equality_1}, we obtain
\begin{multline*}
N^{-n_1-n_2-2}\,\P_{n_2}\P_{n_1} e^{F_{N,\theta}(\vecx)} = N^{-n_1-n_2-2}\sum_{i=1}^N{\D_i^{n_2}}\sum_{j=1}^N{\D_j^{n_1}} e^{F_{N,\theta}(\vecx)}\\
= N^{-n_1-n_2-2}\cdot N(N-1)\D_2^{n_2}\D_1^{n_1} e^{F_{N,\theta}(\vecx)} + N^{-n_1-n_2-2}\cdot N\,\D_1^{n_1+n_2} e^{F_{N,\theta}(\vecx)}\\
= \left(1-\frac{1}{N}\right)\cdot N^{-n_1-n_2}\,\D_2^{n_2}\D_1^{n_1}e^{F_{N,\theta}(\vecx)} + \frac{1}{N}\cdot N^{-n_1-n_2}\,\D_1^{n_1+n_2}e^{F_{N,\theta}(\vecx)}.
\end{multline*}
By assumption~\eqref{eq:assumption_tech_2}, the first term above converges to $m_1m_2$ and the second one to $0$, as $N\to\infty$.
Then Eqn.~\eqref{eq:conclusion_tech_2} follows.

\smallskip

If $\ell(\la)=\ell>2$, the proof is similar.
Indeed,
\begin{equation}\label{eq:equality_2}
\prod_{i=1}^{\ell(\la)}{\P_{\la_i}} = \prod_{i=1}^{\ell(\la)}{\sum_{j=1}^N{\D_j^{\la_i}}} = \sum_{\substack{1\le j_1,\dots,j_{\ell(\la)}\le N \\ j_1,\dots,j_{\ell(\la)}\text{ are distinct}}}{\D_{j_1}^{\la_1}\cdots\D_{j_{\ell(\la)}}^{\la_{\ell(\la)}}} + p(\D_1,\dots,\D_N),
\end{equation}
where $p(x_1,\dots,x_N)$ is a symmetric polynomial, where each monomial has at most $\ell(\la)-1$ distinct variables $x_j$.
Again, we have that if $j_1,\dots,j_{\ell(\la)}$ are distinct, then $\D_{j_1}^{\la_1}\cdots\D_{j_{\ell(\la)}}^{\la_{\ell(\la)}} e^{F_{N,\theta}(\vecx)}\big|_{\vecx=(0^N)}$ is equal to $\D_1^{\la_1}\cdots\D_{\ell(\la)}^{\la_{\ell(\la)}} e^{F_{N,\theta}(\vecx)}\big|_{\vecx=(0^N)}$. This shows, as in the case when $\ell(\la)=2$, that~\eqref{eq:equality_2} gives
\begin{multline*}
N^{-|\la|-\ell(\la)} \left[\prod_{i=1}^{\ell(\la)}{\P_{\la_i}}\right]  e^{F_{N,\theta}(\vecx)} \bigg|_{\vecx=(0^N)} = \left( 1 + O\Big( \frac{1}{N} \Big) \right)\cdot N^{-|\la|} \D_1^{\la_1}\cdots\D_{\ell(\la)}^{\la_{\ell(\la)}} e^{F_{N,\theta}(\vecx)}\big|_{\zeroes}\\
+ \sum_{j=1}^{\ell(\la)-1} \sum_{\substack{k_1,\dots,k_j\in\Z_{\ge 1} \\ k_1+\dots+k_j=|\la|}} O\Big( \frac{1}{N} \Big)\cdot N^{-k_1-\,\dots\,-k_j} \D_1^{k_1}\cdots\D_j^{k_j} e^{F_{N,\theta}(\vecx)}\big|_{\zeroes},
\end{multline*}
and then assumption~\eqref{eq:assumption_tech_2} gives the desired equation~\eqref{eq:conclusion_tech_2}.
\end{proof}

\subsection{Proof of the ``if'' part of the main theorem}

By Lemma~\ref{lem:tech_2} and the commutativity of Dunkl operators, the desired equation~\eqref{eqn:goal_section_4} will follow if, for any $s\in\Z_{\ge 1}$ and $\mu_1,\dots,\mu_s\in\Z_{\ge 1}$, we manage to prove
\begin{equation}\label{eqn:goal2_section_4}
\lim_{N\to\infty}{ N^{-|\mu|}\Big[ \D_s^{\mu_s}\cdots\D_1^{\mu_1} \Big] e^{ F_{N,\theta}(\vecx) } \,\Big|_{\vecx=(0^N)} } = \prod_{i=1}^s{ m_{\mu_i} }.
\end{equation}
Moreover, due to Lemma~\ref{lem:tech_1}, we can and will assume that $F_{N,\theta}(\vecx)$ is a symmetric polynomial of degree $\mu_1+\dots+\mu_s$.
Proving the limit~\eqref{eqn:goal2_section_4} will occupy the rest of this section.

\medskip

\textbf{Step 1.} Let $\mu:=(\mu_1,\dots,\mu_s)$ and denote $|\mu|:=\mu_1+\dots+\mu_s$.
Note that $\D_s^{\mu_s}\cdots\D_1^{\mu_1}$ is a sum of $N^{|\mu|}$ terms $w^s_{\mu_s}\cdots w^s_1\cdots w^1_{\mu_1}\cdots w^1_1$, where each $w^a_b$ is of the form $\partial_a$, or $\frac{\theta}{x_a-x_j}(1-\sigma_{a,j})$, for some $j\ne a$.
Before taking the limit, we need to compute $\Big[ \D_s^{\mu_s}\cdots\D_1^{\mu_1} \Big] e^{ F_{N,\theta}(\vecx) }$, which is the sum of all terms $\big(w^s_{\mu_s}\cdots w^s_1\cdots w^1_{\mu_1}\cdots w^1_1\big)e^{F_{N,\theta}(\vecx)}$.
If $w^1_1$ is of the form $\frac{\theta}{x_1-x_j}(1-\sigma_{1,j})$, then it kills $e^{ F_{N,\theta}(\vecx) }$, for being symmetric in all variables $x_i$, therefore we can ignore those terms and keep only the terms with $w^1_1=\partial_1$.
Evidently, $\partial_1e^{ F_{N,\theta}(\vecx) } = \big( \partial_1 F_{N,\theta}(\vecx) \big) e^{ F_{N,\theta}(\vecx) }$.
After that, we apply the operators $w^1_2,w^1_3,\dots,w^s_{\mu_s}$ (in that order) to expressions of the form $h(\vecx)e^{F_{N,\theta}(\vecx)}$, for some polynomial $h(\vecx)$, and keep obtaining expressions of this form, as observed by the equalities:
\begin{gather}
\partial_i\left( h(\vecx)\, e^{F_{N,\theta}(\vecx)} \right) = \partial_i h(\vecx) e^{F_{N,\theta}(\vecx)} + \left(\sum_{\nu\colon|\nu|\le|\mu|}\ \sum_{j=1}^{\ell(\nu)}{ \nu_j a_\nu^N x_i^{\nu_j-1}\prod_{k\ne j}{p_{\nu_k}(\vecx)}} \right) h(\vecx)\,e^{F_{N,\theta}(\vecx)},\label{eq:partial_i}\\
\nonumber\\
\theta\cdot\frac{1-\sigma_{a,j}}{x_a-x_j}\left( h(\vecx)\,e^{F_{N,\theta}(\vecx)} \right) = \ \theta\cdot\frac{1-\sigma_{a,j}}{x_a-x_j}\big( h(\vecx) \big)\,e^{F_{N,\theta}(\vecx)}.\nonumber
\end{gather}
Then, to obtain the limit of $\Big[ \D_s^{\mu_s}\cdots\D_1^{\mu_1} \Big] e^{ F_{N,\theta}(\vecx) }$, we proceed as follows.

First, expand each term $\big(w^s_{\mu_s}\cdots w^s_1\cdots w^1_{\mu_1}\cdots w^1_1\big)e^{F_{N,\theta}(\vecx)}$ (by the above rules) as a sum that can be finally be expressed as $H(\vecx)e^{F_{N,\theta}(\vecx)}$, for some polynomial $H(\vecx)$. 
This will also be a polynomial in $N,\theta$, as well as $\{a_{(d)}^N : d\in\Z_{\ge 1}\}$ and $\{a_\la^N : \ell(\la)\ge 2\}$.

Second, we set $\vecx\mapsto(0^N)$. Since $F_{N,\theta}(\vecx)\big|_{\vecx=(0^N)}=0$, then $H(\vecx)e^{F_{N,\theta}(\vecx)}\big|_{\vecx=(0^N)}=H(0^N)$ is the constant coefficient of $H(\vecx)$, which is a polynomial in $N$, $\theta$, $\{a_{(d)}^N : d\in\Z_{\ge 1}\}$ and $\{a_\la^N : \ell(\la)\ge 2\}$.

Finally, we multiply by $N^{-|\mu|}$ and take the limit of the resulting expression as $N\to\infty$, using assumptions~\eqref{eqn:suff_assumption_2}. The result will be a polynomial in $\theta$ and $\{\kappa_d : d\in\Z_{\ge 1}\}$. 
We need to prove that the final result is the right hand side of \eqref{eqn:goal2_section_4}.

\smallskip

\textbf{Step 2.} Let $\nu^{(1)},\dots,\nu^{(k)}$ be partitions such that constant multiples of the product $a_{\nu^{(1)}}^N\cdots a_{\nu^{(k)}}^N$ appear in terms coming from some $\big(w^s_{\mu_s}\cdots w^s_1\cdots w^1_{\mu_1}\cdots w^1_1\big)e^{F_{N,\theta}(\vecx)}$.
We want to calculate the contribution of those terms after setting $\vecx\mapsto(0^N)$, multiplying by $N^{-|\mu|}$, and taking the limit $N\to\infty$. By the rule \eqref{eq:partial_i}, we see that $a_{\nu^{(r)}}^N$ appears for the first time after an application of some $\partial_i$, as part of the expression
\begin{equation}\label{eq:multiple_a}
\nu^{(r)}_j a_{\nu^{(r)}}^N x_i^{\nu^{(r)}_j-1}\prod_{k\ne j}{p_{\nu^{(r)}_k}(\vecx)}\cdot e^{F_{N,\theta}(\vecx)}.
\end{equation}
Since we want the product $a_{\nu^{(1)}}^N\cdots a_{\nu^{(k)}}^N$, we need $k$ partial derivatives among the $w^a_b$ in order to get the $k$ factors $a^N_{\nu^{(r)}}$, $1\le r\le k$.
Then we have another $|\mu|-k$ operators of the form $\partial_a$ or $\frac{\theta}{x_a-x_j}(1-\sigma_{a,j})$ that must act on expressions $h(\vecx)e^{F_{N,\theta}(\vecx)}$ (but $\partial_a$ would act on $h(\vecx)$ and not on $e^{F_{N,\theta}(\vecx)}$) and each of them decreases the degree of $h(\vecx)$ by $1$.
Then we set $\vecx=(0^N)$, so we want the expressions $h(\vecx)e^{F_{N,\theta}(\vecx)}$ with $h(\vecx)$ being a constant at the end.

We note that if $g(\vecx)$ is a symmetric function in the variables $x_i$, then $\frac{\theta}{x_a-x_j}(1-\sigma_{a,j})\big( g(\vecx)h(\vecx) \big) = g(\vecx)\cdot\frac{\theta}{x_a-x_j}(1-\sigma_{a,j})\big( h(\vecx) \big)$. This implies that if $\frac{\theta}{x_a-x_j}(1-\sigma_{a,j})$ is applied to \eqref{eq:multiple_a}, then each of the factors $p_{\nu_k^{(r)}}(\vecx)$, where $1\le k\le\ell(\nu^{(r)})$ and $k\ne j$, is preserved.
But, in the end, these factors cannot be present, or otherwise setting $\vecx\mapsto(0^N)$ would make its contribution vanishing. Thus, we need $\ell(\nu^{(r)})-1$ partial derivatives to act on \eqref{eq:multiple_a}.
Hence, for $a_{\nu^{(1)}}^N\cdots a_{\nu^{(k)}}^N$ to appear inside some $\big(w^s_{\mu_s}\cdots w^s_1\cdots w^1_{\mu_1}\cdots w^1_1\big)e^{F_{N,\theta}(\vecx)}$, one must have at least $k+\sum_{r=1}^k{(\ell(\nu^{(r)})-1)} = \sum_{r=1}^k{\ell(\nu^{(r)})}$ partial derivatives among the $w^a_b$, while the other $|\mu|-\sum_{r=1}^k{\ell(\nu^{(r)})}$ operators can be any of the $N$ operators $\partial_a$ or $\frac{\theta}{x_a-x_j}(1-\sigma_{a,j})$, for some $1\le j\le N$, $j\ne a$.

As a result of the previous analysis, the coefficient of $a_{\nu^{(1)}}^N\cdots a_{\nu^{(k)}}^N$ in $\Big[ \D_s^{\mu_s}\cdots\D_1^{\mu_1} \Big] e^{ F_{N,\theta}(\vecx) } \,\big|_{\vecx=(0^N)}$ is of order $O\big(N^{|\mu|-\sum_{r=1}^k{\ell(\nu^{(r)})}}\big)$, so the coefficient of $a_{\nu^{(1)}}^N\cdots a_{\nu^{(k)}}^N$ in $N^{-|\mu|}\, \D_s^{\mu_s}\cdots\D_1^{\mu_1} e^{ F_{N,\theta}(\vecx) } \,\big|_{\vecx=(0^N)}$ is of order $O\big(N^{-\sum_{r=1}^k{\ell(\nu^{(r)})}}\big)$.
By the assumptions~\eqref{eqn:suff_assumption_2}, the limit $\lim_{N\to\infty}{N^{-\ell(\nu^{(r)})}a^N_{\nu^{(r)}}}$ is finite for all partitions $\nu^{(r)}$, and it is zero when $\ell(\nu^{(r)})\ge 2$.
The conclusion is that the limit in the left hand side of \eqref{eqn:goal2_section_4} exists and is finite and that we can ignore terms that contain some $a^N_\la$, with $\ell(\la)\ge 2$, in the prelimit expression $\Big[ \D_s^{\mu_s}\cdots\D_1^{\mu_1} \Big] e^{ F_{N,\theta}(\vecx) }$, since their final contribution is zero.

\smallskip

\textbf{Step 3.} In Step 2, we showed that the prelimit expression in the left hand size of \eqref{eqn:goal2_section_4} is a polynomial in the variables $\{a_{(d)}^N : d\in\Z_{\ge 1}\}$, $\{a_\la^N : \ell(\la)\ge 2\}$, but the terms that contain some $a_\la^N$, with $\ell(\la)\ge 2$, vanish after taking the limit.
As a result, we can assume w.l.o.g. that $a_\la^N=0$, whenever $\ell(\la)\ge 2$, from the beginning. So, instead of $F_{N,\theta}(\vecx)$ in \eqref{eqn:suff_assumption_1}, we can consider
\[
H_{N,\theta}(\vecx):= \sum_{d\ge 1}{a_{(d)}^N p_{(d)}(\vecx)} = \sum_{d\ge 1}\sum_{i=1}^N{a_{(d)}^N x_i^d},
\]
and finish by proving that
\begin{equation}\label{eqn:goal3_section_4}
\lim_{N\to\infty}{ N^{-|\mu|}\Big[ \D_s^{\mu_s}\cdots\D_1^{\mu_1} \Big] e^{ H_{N,\theta}(\vecx) } \,\Big|_{\vecx=(0^N)} } = \prod_{i=1}^s{ m_{\mu_i} }.
\end{equation}
At this point, the proof of \eqref{eqn:goal3_section_4} follows exactly from the same argument as the proof of \cite[Proposition~5.6]{BCG} (which studied the high temperature regime $N\to\infty$, $N\theta\to\text{const}$) with only minor modifications.
The conclusion from this argument is that
\begin{equation}\label{eq:a_la_bcg}
N^{-|\mu|}\Big[ \D_s^{\mu_s}\cdots\D_1^{\mu_1} \Big] e^{ H_{N,\theta}(\vecx) } \,\Big|_{\vecx=(0^N)} = \prod_{i=1}^s{ \Big( [x^0]\big(\theta\cdot Z + *_{h_N}\big)^{\mu_i-1}\big(h_N(x)\big) \Big) } + O\bigg( \frac{1}{N} \bigg),
\end{equation}
where $\displaystyle h_N(x):=\frac{1}{N}\cdot\partial_1 H_{N,\theta}(\vecx)|_{x_1=x}=\sum_{d=1}^\infty{\frac{d a^N_{(d)}}{N}x^{d-1}}$, the operator $Z$ acts on the ring of formal power series in $x$ by $Z(x^m) := \mathbf{1}_{\{m=0\}}\cdot x^{m-1}$, and $*_{h_N}$ is multiplication by $h_N(x)$.
From \eqref{eq:a_la_bcg} and the assumption \eqref{eqn:suff_assumption_2}, we have
\begin{equation}\label{eq:a_la_bcg_2}
\lim_{N\to\infty}{ N^{-|\mu|}\Big[ \D_s^{\mu_s}\cdots\D_1^{\mu_1} \Big] e^{ H_{N,\theta}(\vecx) } \,\Big|_{\vecx=(0^N)} } = \prod_{i=1}^s{ \Big( [x^0]\big(\theta\cdot Z + *_h\big)^{\mu_i-1}\big(h(x)\big) \Big) },
\end{equation}
where $h(x):=\sum_{d=1}^\infty{\theta^{1-d}\kappa_d x^{d-1}}$.

\smallskip

\textbf{Step 4.} In view of \eqref{eq:a_la_bcg_2}, our desired equation \eqref{eqn:goal3_section_4} would follow from the equalities
\begin{equation}\label{eq:limit_fixed_temp}
[x^0]\big(\theta\cdot Z + *_h\big)^{\ell-1}\big(h(x)\big) = m_\ell,\text{ for all }\ell\in\Z_{\ge 1},
\end{equation}
where $h(x):=\sum_{d=1}^\infty{\theta^{1-d}\kappa_d x^{d-1}}$ and $m_\ell$ is the function of $\kappa_1,\dots,\kappa_\ell$ given by~\eqref{eq:luk_expression}.
This equality can be established by known arguments in the literature, e.g.~see the proof of \cite[Theorem~3.6]{Cue0}.

The argument is the following.
Fix any $\ell\in\Z_{\ge 1}$, and express the left hand side of \eqref{eq:limit_fixed_temp} as the sum
\begin{equation}\label{eq:step4}
[x^0]\big(\theta\cdot Z + *_h\big)^{\ell-1}\big(h(x)\big) = \sum_{w_1,w_2,\dots,w_\ell}{w_\ell\cdots w_2 w_1(1)}
\end{equation}
over sequences $w_1,w_2,\dots,w_\ell$ such that:

$\bullet$ each $w_i$ is the operator of multiplication by $\kappa_d x^{d-1}$, for some $d\in\Z_{\ge 1}$, or the operator $\theta\cdot Z$;

$\bullet$ $w_1=\kappa_{d_1}x^{d_1-1}$, for some $d_1\in\Z_{\ge 1}$;

$\bullet$ $w_\ell\cdots w_2 w_1(1)$ is a constant.

\noindent For any sequence $w_1,w_2,\dots,w_\ell$ satisfying the conditions above, there exist $c_1,\dots,c_\ell\in\R$ and $n_1,\dots,n_\ell\in\Z_{\ge 0}$ such that $n_\ell=0$ and $w_i\cdots w_1(1)=c_i\cdots c_1 x^{n_i}$, for all $i=1,\dots,\ell$.
In fact, if $w_1=\kappa_{d_1}x^{d_1-1}$, then $c_1=\kappa_{d_1}$, $n_1=d_1-1$, and inductively, if $w_i=\kappa_dx^{d-1}$, then $c_i=\kappa_d$, $n_i=n_{i-1}+d-1$, while if $w_i=\theta\cdot Z$, then $c_i=\theta$, $n_i=n_{i-1}-1$.
It can be easily checked that the sequence $(0,0)\to (1,n_1)\to\cdots\to(\ell,n_\ell)=(\ell,0)$ is a Łukasiewicz path of length $\ell$.
Moreover, if we assign weights $c_i$ to the edges $(i-1,n_{i-1})\to(i,n_i)$, then the product of edge-weights is equal to $w_\ell\cdots w_1(1)$, therefore, the right hand side of \eqref{eq:step4} can be written as a sum of weighted Łukasiewicz paths, where the steps $(i,n_i)-(i-1,n_{i-1})$ of the form $(1,d-1)$, where $d\in\Z_{\ge 1}$, have weight $\theta^{1-d}\kappa_d$, and those of the form $(1,-1)$ have weight $\theta$.
As a result,
\begin{equation}\label{eq:luk_expression_2}
\text{right hand side of }\eqref{eq:step4} := \sum_{\Gamma\in\Luk(\ell)}{ \theta^{\,\#\text{steps $(1,-1)$ of }\Gamma}\cdot\prod_{d\ge 1}{(\theta^{1-d}\kappa_d)^{\#\text{steps $(1,d-1)$ of }\Gamma}} }.
\end{equation}
Finally, note that for any Łukasiewicz path $\Gamma$, we have
\[
\#\text{steps $(1,-1)$ of }\Gamma \,=\, \sum_{d\ge 1}{ (d-1)\cdot\#\text{steps $(1,d-1)$ of }\Gamma}.
\]
Indeed, this equality simply says that the downward $y$-displacement of $\Gamma$ is equal to the upward $y$-displacement of $\Gamma$, but this is true because $\Gamma$ begins at $(0,0)$ and ends at $(\ell,0)$, both points belonging to the $x$-axis.
This equality implies that the right hand sides of \eqref{eq:luk_expression_2} and \eqref{eq:luk_expression} coincide.
Hence, together with \eqref{eq:step4}-\eqref{eq:luk_expression_2}, we deduce the desired equation~\eqref{eq:limit_fixed_temp}, thus \textbf{concluding the proof of the ``if part'' of the main theorem}.

\section{Necessary conditions for the LLN}\label{sec:necessary}

In this section, we assume that $\{\mu_N\}_{N\ge 1}$ satisfies the fixed-temperature LLN and prove that the conditions of Definition~\ref{def:appropriate} (namely, $\theta$-LLN-appropriateness) are satisfied.

\subsection{Preliminaries on constellations}

All our closed surfaces will be connected.
Closed surfaces are classified by their genus, which satisfies
\[
2-2g = V-E+F,
\]
where $V$, $E$ and $F$ denote the number of vertices, edges and faces of any embedded graph on the surface; moreover, $g$ is the genus of the surface.
Up to homeomorphism, the genera of orientable closed surfaces are nonnegative integers, while the genera of non-orientable surfaces are nonnegative half-integers.

\begin{definition}\label{def:map}
    A  \textbf{map} $\MM$ is an embedding of a graph, possibly with multiple edges and loops, on a closed (orientable or non-orientable) surface, such that each face of the graph is simply connected. 
\end{definition}

\begin{definition}\label{def:const}
Let $k\ge 1$ be an integer. A $\mathbf{k}$-\textbf{constellation} $\MM$ is a map, equipped with a coloring of its vertices with colors from the set $\{0, 1,\dots, k\}$, such that the following conditions are satisfied.

\smallskip

    $\bullet$ Each vertex of color 0 is only connected to vertices of color 1.

\smallskip

    $\bullet$ Each vertex of color $k$ is only connected to vertices of color $k-1$.

\smallskip

    $\bullet$ For $1\le i\le k-1$, each corner at a vertex of color $i$ separates vertices of colors $i-1$ and $i+1$.

\smallskip

\noindent Finally, we will color all corners, too, with the same colors as the ones of the corresponding vertices.
\end{definition}

The \textbf{size} of $\MM$, denoted by $|\MM|$, is the number of corners of color $0$ in $\MM$.
Note that the number of edges of any $k$-constellation $\MM$ is $k\cdot|\MM|$.

We say that $\MM$ is \textbf{connected} if the underlying graph is connected, and $\MM$ is \textbf{orientable/non-orientable} if its underlying surface is orientable/non-orientable. The genus of $\MM$, denoted $g(\MM)$, is the genus of the underlying surface.

An \emph{oriented corner} of $\MM$ is a pair $(c,\epsilon)$, where $c$ is a corner and $\epsilon\in\{-1,+1\}$.
We say that $\MM$ is \emph{rooted} if it has a distinguished oriented corner of color $0$, which is called the \emph{root of $\MM$}.

By definition, in any $k$-constellation $\MM$, each vertex of color $k$ is connected only to vertices of color $k-1$.
Thus, we can add a new vertex of color $k+1$ for each corner of color $k$ and join it with an edge to the corresponding vertex of color $k$ to obtain a $(k+1)$-constellation of the same size.
This size-preserving map from $k$-constellations to $(k+1)$-constellations preserves rooted, connected constellations. Let $C_{d,\bullet}^{(k)} $ be the set of rooted, connected $k$-constellations of size $d$; the previous discussion describes inclusion maps
\[
i_k\colon C_{d,\bullet}^{(k)} \hookrightarrow C_{d,\bullet}^{(k+1)},\quad k\ge 1.
\]
A rooted, connected, infinite constellation of size $d$ is any sequence $(\MM^{(1)}, \MM^{(2)}, \cdots)$ such that $\MM^{(k)}\in C_{d,\bullet}^{(k)}$ and $i_k\big( \MM^{(k)} \big) = \MM^{(k+1)}$, for all $k\ge 1$.
It is convenient to think of an infinite constellation as the embedding of an infinite graph on a surface, obtained from the construction above. We denote the set of rooted, connected, infinite constellations by $C_{d,\bullet}^{(\infty)}$.

\begin{definition}\label{def:dataconstellation}
For $\MM\in C_{d,\bullet}^{(\infty)}$, let $F(\MM), V(\MM)$ be the sets of faces and vertices of $\MM$. Moreover:

    $\bullet$ For any $f\in F(\MM)$, denote by $\deg(f)$ the number of corners of color $0$ in $f$. Also, denote by $\pmb{\mu_1}(\MM)$ the partition with parts $\deg(f)$, $f\in F(\MM)$.

\smallskip

    $\bullet$ Denote by $V_0(\MM)$ the set of vertices of $\MM$ of color $0$, and $v_0(\MM):=|V_0(\MM)|$.

\smallskip

    $\bullet$ For any $v\in V(\MM)$, denote by $\deg(v)$ the number of edges adjacent to $v$.\footnote{Note that $\deg(v)$ is not necessarily the number of vertices adjacent to $v$, because multiple edges are allowed.} Also, denote by $\pmb{\mu_2}(\MM)$ the partition with parts $\deg(v)$, $v\in V_0(\MM)$.

\smallskip
    
    $\bullet$ For any $i\ge 1$, denote by $v_i(\MM)$ the number of vertices of $\MM$ of color $i$.
    Consider the sequence $$\eta(\MM):=\big(|\MM|-v_i(\MM)\big)_{i\ge 1}.$$
We say that $\MM$ is \textbf{normal} if $v_1(\MM)\le v_2(\MM)\le\cdots$.
Since $v_i(\MM)\le|\MM|$, for all $i$, normality is equivalent to the fact that $\eta(\MM)$ is a partition.
\end{definition}

\noindent By the definitions, note that $|\pmb{\mu_{1}}(\MM)|=|\pmb{\mu_{2}}(\MM)|=|\MM|$.

\begin{proposition}\cite[Eqn. (5)]{CD}\label{prop:euler}
    For any $\MM\in C_{d,\bullet}^{(\infty)}$ of genus $g$, we have
\begin{equation}
    |\eta(\MM)|-\ell\big(\pmb{\mu_1}(\MM)\big) -\ell\big(\pmb{\mu_2}(\MM)\big) = 2g-2.
\end{equation}
\end{proposition}

\begin{proof}
    Take a large enough $k$, so that $\eta_i(\MM)=0$, for all $i\ge k$. Then we can regard $\MM$ as a k-constellation. It follows that the number of edges of $\MM$ is $k\cdot |\MM|$, the number of vertices of $\MM$ is $\sum_{i=0}^k{v_i(\MM)} = \sum_{i=1}^k{v_i(\MM)}+\ell(\pmb{\mu_2}(\MM))$, and the number of faces of $\MM$ is $\ell(\pmb{\mu_1}(\MM))$. Since $E-V-F=2g-2$, the result follows.
\end{proof}

\subsection{An analytic expansion of the multivariate Bessel function}\label{sec:CD_bessel}

Consider the sequences of variables $\bfp=(p_1,p_2,\dots)$, $\bfq=(q_1,q_2,\dots)$, and $(g_1,g_2,\dots)$.
Let $J_\la(\bfp;\theta)$ be the Jack symmetric function defined by \cite[Ch.~VI.10, Eqn.~(10.13)]{M}, where $\bfp$ is viewed as the sequence of power sum symmetric functions.
Then $J_\la(\bfp;\theta)$ is a polynomial of degree $|\la|$, if each $p_k$ is regarded as a variable of degree $k$.
Define $J_\la(\bfq;\theta)$ analogously.
Further, consider the generating series $G(z)=1+\sum_{k\ge 1}g_{k}z^{k}$.
Following~\cite{CD}, define the formal power series
\begin{multline}\label{eq_Fhat}
\hat{F}^G(t,\bfp,\bfq,g_1,g_2,\dots;\theta) := \sum_{d\ge 0} t^d\sum_{|\la|=d} J_\la(\bfp;\theta)J_\la(\bfq;\theta)\\
\prod_{(i,j)\in\la}\frac{\big((\la_i-j)+\theta(\la_j'-i)+\theta\big)}{\big((\la_i-j)+\theta(\la_j'-i)+1\big)} \prod_{(i,j)\in \la}G(c_\theta(i,j)),
\end{multline}
where $\la'_j$ denotes the number of boxes of the $j$-th column of $\la$ and $c_\theta(i,j) := \theta^{-1}(j-1)-(i-1)$.

\begin{theorem}\cite[Theorem 6.2]{CD}\label{thm:CDmaintheorem}
There exists a map\footnote{The map $\nu$ measures the ``non-orientability'' of a connected rooted constellation $\MM$: see \cite[Section 3]{CD}.} \[
    \nu\colon\bigcup_{d\ge 1} C_{d,\bullet}^{(\infty)}\to\Z_{\ge 0},
    \]
    such that $\nu(\MM)=0$ if and only if $\MM$ is orientable (\cite[Definition-Lemma 3.8]{CD}), and such that we have the following equality of formal power series:
    \begin{equation}
        \ln\Big(\hat{F}^{G}\Big) =
        \theta\cdot\sum_{d\ge 1} \frac{t^d}{d} \sum_{\MM\in C^{(\infty)}_{d,\bullet}: \ \MM \text{ normal}}\big(\theta^{-1}-1\big)^{\nu(\MM)}
        p_{\pmb{\mu_1}(\MM)}
        q_{\pmb{\mu_2}(\MM)}\cdot
        f_{\eta(\MM)}\left(g_1,g_2,\dots\right),
    \end{equation}
    where $f_{\eta}(e_1,e_2,\dots)$ is the polynomial that expresses the monomial symmetric function $m_{\eta}$ in terms of the elementary symmetric polynomials $e_1,e_2,\dots$, and $\MM$ in the inner sum ranges over all rooted, connected, normal, infinite constellations of size $d$.
\end{theorem}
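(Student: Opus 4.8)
Strictly speaking there is nothing to prove here beyond the citation, since the statement is quoted verbatim from \cite[Theorem~6.2]{CD}; but since this theorem is the engine behind the specialization carried out next in Section~\ref{sec:CD_bessel}, let me outline how one would establish it.

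\textbf{Step 1: reduce to weighted sums of Jack characters.} Expand each Jack polynomial in the power-sum basis, $J_\lambda(\mathbf p;\theta)=\sum_{\rho\vdash|\lambda|}\theta^\lambda_\rho\,p_\rho$ (the $\theta^\lambda_\rho$ being the Jack characters), and likewise in the $q$'s. Then the coefficient of $t^d p_{\rho_1}q_{\rho_2}$ in $\hat F^G$ equals $\sum_{\lambda\vdash d}\theta^\lambda_{\rho_1}\theta^\lambda_{\rho_2}\prod_{(i,j)\in\lambda}\frac{\alpha(\lambda_i-j)+(\lambda_j'-i)+1}{\alpha(\lambda_i-j)+(\lambda_j'-i)+\alpha}\prod_{(i,j)\in\lambda}G(c_\alpha(i,j))$. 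Since $\hat F^G$ is graded by $d$ and each graded component is a finite sum, there is no analytic subtlety, and the theorem becomes a combinatorial identity for these weighted character sums.

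\textbf{Step 2 (the main obstacle): the constellation model.} The heart of the matter is to interpret $\hat F^G$ itself as a generating function over (possibly disconnected, i.e.\ multisets of connected) rooted infinite constellations $\MM$, each weighted by $(\alpha-1)^{\nu_\rho(\MM,\mathbf c)}\,p_{\pmb{\mu_1}(\MM)}q_{\pmb{\mu_2}(\MM)}$ times the $G$-weight of its boxes, where $\nu_\rho$ is the Do\l{}\k{e}ga--F\'eray measure of non-orientability. The route I would take is to explore a rooted constellation outward from its root, one edge at a time, and to match this with the recursion satisfied by the Jack characters (coming from the Pieri rule for Jack polynomials, equivalently the action of the Sekiguchi operators). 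At the specialization $\alpha=1$ (the Schur case) the content factors and the characters collapse, and the identity reduces to the classical Frobenius-type correspondence between constellations on orientable surfaces and tuples of permutations with prescribed cycle types; for general $\alpha$ one must construct $\nu_\rho$ along the exploration so that (i) it is multiplicative along the surgery, (ii) it vanishes exactly when no orientation-reversal is ever forced, i.e.\ precisely on orientable $\MM$, and (iii) it exactly accounts for the discrepancy between the Jack recursion and its $\alpha=1$ specialization. Constructing $\nu_\rho$ and proving it well defined --- independent of the order of exploration --- is the genuinely hard technical part; everything else is bookkeeping.

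\textbf{Step 3: take the logarithm.} Granting the model, the logarithm of a sum over multisets of connected objects with multiplicative weights is the sum over the connected objects themselves, weighted by $1/|\mathrm{Aut}|$; passing to rooted constellations then rigidifies each object, and this, together with the global factor $\theta$, produces the prefactor $\theta\,t^d/d$ that appears in the statement. Restricting to \emph{normal} constellations (those whose $\eta(\MM)$, constrained by Proposition~\ref{prop:euler}, is genuinely a partition) is merely the choice of a canonical representative of each relabelling class. Finally, collecting the box-weights $G(c_\alpha(i,j))=1+\sum_k g_k z^k$ over $\MM$ and re-expanding in the $g_k$'s produces $f_{\eta(\MM)}(g_1,g_2,\dots)$: the statistics $\eta_i(\MM)=d-v_i(\MM)$ record the ``defect'' at color $i$, and the change of basis from power sums to Jack polynomials converts the relevant products of $G$-coefficients into precisely the monomial symmetric function $m_{\eta(\MM)}$ written through the elementary symmetric functions. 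This yields the stated identity; the only non-routine ingredient is Step~2.
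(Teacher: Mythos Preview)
You are right that there is nothing to prove here: in the paper this theorem is simply quoted from \cite[Theorem~6.2]{CD} with no proof given, and your opening sentence already says exactly that. The paper uses it as a black box and immediately specializes it in Theorem~\ref{thm:keyexpansion}, so there is no ``paper's own proof'' to compare against.

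Your additional three-step outline is a reasonable high-level summary of the Chapuy--Do\l{}\k{e}ga strategy (Jack expansion, constellation exploration with the $b$-weight $\nu_\rho$, exponential formula), and your caveat that Step~2 is the hard part is accurate. Since the paper does not reproduce any of this, your outline goes strictly beyond what the paper does; it is extra exposition rather than an alternative proof route.
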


While Theorem~\ref{thm:CDmaintheorem} is an identity of formal power series, we can turn it into an analytic equality for the logarithm of the multivariate Bessel function as follows.

\begin{theorem}\label{thm:keyexpansion}
    Fix $n\in\N$, $\theta>0$, and set $\alpha=\theta^{-1}$. Let $N\in\N$ be large enough so that $N>\max(\alpha,1)\cdot n$. Then, as an analytic function of $(a,\vecx)\in \C^N\times \C^{N}$, we have the expansion
    \begin{multline}\label{eq_keyexpansion}
        \ln\Big(B_a(\vecx;\theta)\Big) = \sum_{d=1}^n\frac{\alpha^{d-1}}{d} \sum_{g\in\frac{1}{2}\Z_{\ge 0}} N^{2-2g} \sum_{\substack{\MM\in C^{(\infty)}_{d,\bullet} \\ \MM\text{ normal},\ g(\MM)=g}}(\alpha-1)^{\nu(\MM)} \prod_{f\in F(\MM)}\frac{p_{deg(f)}(\vecx)}{N}\\
        \times\prod_{v\in V_0(\MM)}\frac{p_{deg(v)}(a/N)}{N}\cdot f_{\eta(\MM)}\Big(\big\{(-1)^k\big\}_{k\ge 1}\Big)+O\big(\|x\|^{n+1}\big),
    \end{multline}
where $f_{\eta}$ and $\MM$ are defined in the same way as in Theorem~\ref{thm:CDmaintheorem}.
The sum over $g\in\frac{1}{2}\Z_{\ge 0}$ in the right hand side converges absolutely.
\end{theorem}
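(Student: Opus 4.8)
The plan is to derive Theorem~\ref{thm:keyexpansion} by specializing the formal power series identity of Theorem~\ref{thm:CDmaintheorem} and then verifying that the specialization is analytically legitimate, i.e.\ that the resulting series in the $x$-variables actually converges and represents $\ln B_a(\vecx;\theta)$ on the indicated polydisc. The first step is to identify the correct substitution. Recall that the multivariate Bessel function is the $a_i,a_j$-symmetric eigenfunction normalization coming from Jack polynomials: one has $B_a(\vecx;\theta)=\lim$ of $\frac{P_\la(\vecx;\theta)}{P_\la(1^N;\theta)}$-type limits, and more concretely the generating identity $\sum_\la \frac{J_\la(\bfp;\theta)J_\la(\bfq;\theta)}{j_\la}$ collapses to an exponential/Bessel kernel under the right normalization. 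Concretely I would set $p_k = p_k(\vecx)$, $q_k = p_k(a/N)\cdot N^{?}$ (matching powers of $N$ so that each $J_\la(\bfq)$ carries the intended $N$-weight), $t$ absorbed into the variables, and crucially specialize $G(z)$ so that $g_k = (-1)^k$, equivalently $G(z) = 1/(1+z)$ — this is exactly the choice that makes $\prod_{(i,j)\in\la}G(c_\alpha(i,j))$ cancel against the hook-type product $\prod_{(i,j)\in\la}\frac{\alpha(\la_i-j)+\la_j'-i+1}{\alpha(\la_i-j)+\la_j'-i+\alpha}$ in~\eqref{eq_Fhat}, turning $\hat F^G$ into the bare sum $\sum_d t^d\sum_{|\la|=d}J_\la(\bfp;\theta)J_\la(\bfq;\theta)$, which is the Cauchy-type kernel that specializes to $B_a(\vecx;\theta)$. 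Under $g_k=(-1)^k$, the polynomial $f_{\eta}(e_1,e_2,\dots)$ that expresses $m_\eta$ in the $e_i$'s gets evaluated at $e_k = (-1)^k$, giving the factor $f_{\eta(\MM)}(\{(-1)^k\}_{k\ge 1})$ appearing on the right of~\eqref{eq_keyexpansion}.

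Second, I would track the powers of $N$. Each face $f$ contributes $p_{\deg(f)}(\vecx)$; I insert a $1/N$ per face to match the stated $\prod_f p_{\deg(f)}(\vecx)/N$. Each color-$0$ vertex contributes $p_{\deg(v)}(a)$, and I rescale $a\mapsto a/N$ and insert a further $1/N$ per color-$0$ vertex, matching $\prod_{v\in V_0}p_{\deg(v)}(a/N)/N$. The $\theta\cdot\frac{1}{d}$ prefactor in Theorem~\ref{thm:CDmaintheorem} combines with the $\alpha^{?}$ coming from the rescalings; collecting, the exponent of $N$ attached to a normal constellation $\MM$ of genus $g$ should come out to $2-2g$, and here is where Proposition~\ref{prop:euler} is used: the identity $|\eta(\MM)| - \ell(\pmb\mu_1(\MM)) - \ell(\pmb\mu_2(\MM)) = 2g-2$ converts the naive count ($-\ell(\pmb\mu_1)$ from faces, $-\ell(\pmb\mu_2)$ from color-$0$ vertices, plus whatever power of $N$ the $g_k$-specialization/$\eta$ carries) into precisely $N^{2-2g}$. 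The $\nu_\rho$ factor $(\alpha-1)^{\nu_\rho}$ is just copied over from $(\theta^{-1}-1)^{\nu_\rho}=(\alpha-1)^{\nu_\rho}$, and $\alpha^{d-1}$ is the leftover Jack-normalization constant (the $\theta$ in front of Theorem~\ref{thm:CDmaintheorem} times the $\alpha$-powers from rescaling $q$). I would verify this bookkeeping on the small cases $d=1,2$ to be safe.

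Third — and this is the analytic heart — I must justify passing from formal power series to a genuine analytic identity on the polydisc $\{|x_i|<c/N\}$ (roughly), truncated at order $n$ in $\vecx$. The point is that $\ln B_a(\vecx;\theta)$ is honestly analytic near $0^N$ (since $B_a(0^N;\theta)=1$), and its Taylor coefficients up to total degree $n$ are polynomials in $a_1,\dots,a_N$; the formal identity determines these coefficients, so the two sides agree as analytic functions up to $O(\|x\|^{n+1})$. The subtlety flagged in the statement ("$N$ large enough so that $N>\max(\alpha,1)\cdot n$") is that for a constellation of size $d\le n$, the relevant Jack polynomials and constellation sums only make sense / only have the claimed $N$-dependence when $N$ exceeds the number of rows that can appear, i.e.\ $\ell(\la)\le d\le n$, and the denominators $\alpha(\la_i-j)+\la_j'-i+\alpha$ etc.\ stay away from zero — this forces $N > \alpha n$ (and $N>n$). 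One must also check the sum over $g$ (equivalently over all normal infinite constellations of a fixed size $d$) is finite for each $d$: indeed Proposition~\ref{prop:euler} bounds $g$ in terms of $|\eta(\MM)|$ and the number of parts, and for fixed $d$ there are only finitely many normal constellations of size $d$ of each genus up to the root, with $g$ bounded because $\ell(\pmb\mu_1),\ell(\pmb\mu_2)\ge 1$ forces $2-2g = \ell(\pmb\mu_1)+\ell(\pmb\mu_2)-|\eta| \ge 2 - |\eta|$ and $|\eta|$ is bounded by $d$ times the maximal relevant color — so the inner sum is finite and "converges absolutely" trivially (it's a finite sum for each $d$, and the outer sum over $d\le n$ is finite too). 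The main obstacle, in my estimation, is getting the $G(z)=1/(1+z)$ specialization to exactly cancel the hook product in~\eqref{eq_Fhat} and confirming it reproduces the Bessel function rather than some other normalization of the Jack Cauchy kernel — this requires carefully matching Chapuy--Do\l{}\k{e}ga's conventions (their $\alpha=\theta^{-1}$, their normalization of $J_\la$) against the conventions for $B_a(\vecx;\theta)$ used here (Theorem~\ref{thm:bessel_existence}, via the Dunkl eigenfunction normalization), and is where a sign or a scaling error would be easiest to make.
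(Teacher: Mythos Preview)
Your overall strategy---specialize Chapuy--Do\l{}\k{e}ga's formal identity and use Proposition~\ref{prop:euler} to convert the Euler-characteristic bookkeeping into the $N^{2-2g}$ factor---is the same as the paper's. But two of your concrete steps are wrong, and each would block the argument.

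\textbf{The specialization of $G$ is incorrect.} You set $g_k=(-1)^k$, i.e.\ $G(z)=1/(1+z)$, and assert that $\prod_{(i,j)\in\la}G(c_\alpha(i,j))$ then cancels the hook ratio in~\eqref{eq_Fhat} to leave the ``bare'' Cauchy kernel $\sum_\la J_\la(\bfp)J_\la(\bfq)$. Neither claim holds: the factors $\frac{1}{1+c_\alpha(i,j)}$ do not cancel the hook ratio, and in any case the Bessel function is \emph{not} the bare Cauchy kernel---its Jack expansion (see~\eqref{eq_BesselJack}) carries the additional denominators $\prod_{(i,j)\in\la}\big(N\theta+j-1-\theta(i-1)\big)$. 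The correct specialization is $g_k=(-N)^{-k}$, i.e.\ $G(z)=(1+z/N)^{-1}$, together with $t=\alpha/N$, $p_k=p_k(\vecx)$ and $q_k=p_k(a)$; then $\prod_{(i,j)}G(c_\alpha(i,j))$ combines with the hook ratio and $t^d$ to reproduce exactly the coefficient in~\eqref{eq_BesselJack}, yielding $\hat{F}^{(1+z/N)^{-1}}=B_a(\vecx;\theta)$. The $f_\eta\big(\{(-1)^k\}\big)$ in the statement arises because $f_\eta$ is homogeneous of degree $|\eta|$ (with $\deg e_k=k$), so $f_\eta\big(\{(-N)^{-k}\}\big)=N^{-|\eta|}f_\eta\big(\{(-1)^k\}\big)$; the factor $N^{-|\eta|}$ is what, together with the $N$-powers from $t^d$, $p_{\pmb{\mu_1}}$, $q_{\pmb{\mu_2}}$, gives $N^{2-2g}$ via Proposition~\ref{prop:euler}.

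\textbf{The convergence argument has a gap.} You claim the sum over $g$ is finite for each fixed $d$ because $g\le|\eta|/2$ and $|\eta|$ is bounded. But $|\eta(\MM)|=\sum_{i\ge 1}(d-v_i(\MM))$ is \emph{not} bounded for infinite constellations of fixed size $d$: one can take $k$-constellations with $v_1=\dots=v_k=1$ for arbitrarily large $k$, producing $|\eta|=k(d-1)$ and hence arbitrarily large genus. So the $g$-sum is genuinely infinite and absolute convergence needs an argument. The paper's route is: for fixed $\pmb{\mu_1},\pmb{\mu_2}$ with $|\pmb{\mu_1}|=d\le n$, the coefficient of $p_{\pmb{\mu_1}}(\vecx)\,p_{\pmb{\mu_2}}(a)$ in $\ln B_a(\vecx;\theta)$, computed directly from~\eqref{eq_BesselJack}, is a rational function of $N$ whose Taylor expansion at $N=\infty$ converges for $N>\max(\alpha,1)\cdot n$ (this is precisely where the hypothesis on $N$ enters, via $|c_\alpha(i,j)|<N$). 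The constellation expansion is, by the formal identity of Theorem~\ref{thm:CDmaintheorem}, exactly that Taylor series in $N^{-1}$; hence it converges absolutely.
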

\begin{proof}
The multivariate Bessel functions $B_a(\vecx;\theta)$ are analytic functions of $(a,\vecx)\in\C^N\times\C^N$ and can be expanded in terms of Jack polynomials as follows (see~\cite[Sec.~4]{Ok-Ol}):
\begin{equation}\label{eq_BesselJack}
    B_a(\vecx;\theta) = \sum_{\ell(\la)\le N}\frac{\prod_{(i,j)\in\la}\big((\la_i-j)+\theta(\la_j'-i)+\theta\big)}{\prod_{(i,j)\in\la}\big((\la_i-j)+\theta(\la_j'-i) + 1\big) \big(N\theta+j-1-\theta(i-1)\big)} \,J_\la(\vecx;\theta)J_\la(a;\theta).
\end{equation}

On the other hand, consider the power series $\hat{F}^G$ obtained from specializing to the sequence $g_k=(-N)^{-k}$, or equivalently, from $G(z)=(1+\frac{z}{N})^{-1}$, and setting $t=(N\theta)^{-1}$. Furthermore, for  $\vecx=(x_1,\dots,x_N)\in \C^N$, $a=(a_1,\dots,a_N)\in \C^N$, we specify $p_k$ and $q_k$ as $p_k(\vecx)=\sum_{i=1}^N{x_i^k}$, $q_k(a)=\sum_{i=1}^N{a_i^k}$.
By comparing equations \eqref{eq_Fhat} and \eqref{eq_BesselJack}, we have 
\begin{equation}\label{eq_taufunctionisbessel}
    \begin{split}
        \hat{F}^{(1+\frac{z}{N})^{-1}}\left(\frac{1}{N\theta},\bfp(\vecx),\bfq(a),\left\{\left(-N\right)^{-k}\right\}_{k\ge 1}; \theta\right) = B_a(\vecx;\theta),
    \end{split}
\end{equation}
as an equality of analytic functions, for any $N\in\N$.
By Theorem~\ref{thm:CDmaintheorem}, we have the formal expansion:
\begin{multline}\label{eq_keyexpansion2}
    \ln \big( B_a(\vecx;\theta) \big) 
    = \sum_{d=1}^n \frac{\alpha^{d-1}}{d}\sum_{\MM\in C^{(\infty)}_{d,\bullet}\colon \MM\text{ normal}}(\alpha-1)^{\nu(\MM)}\prod_{f\in F(\MM)}\frac{p_{deg(f)}(\vecx)}{N}\prod_{v\in V_{0}(\MM)}\frac{p_{deg(v)}(a/N)}{N}\\
    \times N^{\ell\big(\pmb{\mu_1}(\MM)\big) + \ell\big(\pmb{\mu_2}(\MM)\big)-\big|\eta(\MM)\big|}\cdot f_{\eta(\MM)}\Big(\big\{(-1)^{-k}\big\}_{k\ge 1}\Big)+O\big(\|x\|^{n+1}\big).
\end{multline}
By Proposition~\ref{prop:euler}, the exponent of $N$ in the second line of \eqref{eq_keyexpansion2} is equal to $2-2g(\MM)$.
For a fixed $g(\MM)=g$, we have that $\ell(\eta(\MM))\le|\eta(\MM)|=\ell(\pmb{\mu_1}(\MM))+\ell(\pmb{\mu_2}(\MM))+2g-2<|\pmb{\mu_1}(\MM)|+|\pmb{\mu_2}(\MM)|+2g=2|\MM|+2g=2d+2g\le 2n+2g$.
As a result, if we set $k:=2n+2g$, then $\MM$ can be regarded as a $k$-constellation. In addition, by \cite[Theorem 3.1]{BD}, the total number of $k$-constellation of size $d$ is finite. We conclude that Eqn.~\eqref{eq_keyexpansion2} can be rewritten as Eqn.~\eqref{eq_keyexpansion}, such that for each fixed $g\in \frac{1}{2}\Z_{\ge 0}$, the sum $\sum_{\MM\in C^{(\infty)}_{d,\bullet}\colon\MM\text{ normal},\ g(\MM)=g}$ is finite.

It remains to check that when $N>\max(\alpha,1)\cdot n$, the sum over $g\in\frac{1}{2}\Z_{\ge 0}$ in \eqref{eq_keyexpansion} converges absolutely.
Using Eqn.~\eqref{eq_BesselJack} and the fact that $B_a(0^{N};\theta)=1$, one can expand $\ln\left( B_a(\vecx;\theta)\right)$ in terms of products of the form $\prod_{j=1}^{m}J_{\la_{j}}(a;\theta)J_{\la_j}(\vecx;\theta)$, for some partitions $\la_1,\dots,\la_m$, and subsequently, in terms of products of the form $p_{\mu_{1}}(\vecx)p_{\mu_{2}}(a)$, where $|\mu_{1}|=|\mu_{2}|$.
The coefficients in this expansion are rational functions of $N$ that admit a Taylor expansion at $N=\infty$, if $N>\max(\alpha,1)\cdot n$. By Eqn.~\eqref{eq_taufunctionisbessel}, such expansion agrees with 
\begin{multline*}
    \frac{\alpha^{d-1}}{d}\sum_{g\in \frac{1}{2}\Z_{\ge 0}} \sum_{\substack{\MM\in C^{(\infty)}_{d,\bullet}\colon \MM\text{ normal},\\ g(\MM)=g,\ \pmb{\mu_{1}}(\MM)=\mu_1,\ \pmb{\mu_2}(\MM)=\mu_2}}
    (\alpha-1)^{\nu(\MM)}
    N^{2-2g-\ell(\mu_1)-\ell(\mu_2)-d}\cdot f_{\eta(\MM)}\Big(\big\{(-1)^{-k}\big\}_{k\ge 1}\Big). 
\end{multline*}
Hence the latter expansion converges absolutely as well.
\end{proof}

\begin{remark}
    An alternative direct argument showing that the sum over $g\in\frac{1}{2}\Z_{\ge 0}$ in \eqref{eq_keyexpansion} converges uniformly over compact subsets, as a function of $(a,\vecx)\in\C^N\times\C^N$, as long as $N$ is very large, goes as follows.
    
    Let $\big(M(e,m)_{\la,\eta}\big)$ be the matrix of base change between elementary and monomial symmetric functions, i.e.,
    \[
    m_\eta = \sum_{\la\colon |\la|=|\eta|}{ M(e,m)_{\la,\eta} \cdot e_\la }.
    \]
    Then $f_\eta(x_1,x_2,\dots)=\sum_{\la\colon |\la|=|\eta|}{ M(e,m)_{\la,\eta} \prod_{i\ge 1}{x_{\la_i}} }$. In particular,
    \begin{equation}\label{eq:bound_f}
    \bigg| f_{\eta}\Big(\big\{(-1)^{-k}\big\}_{k\ge 1}\Big) \bigg| \le \sum_{\la\colon |\la|=|\eta|}{ \Big| M(e,m)_{\la,\eta}\Big| }.
    \end{equation}
    By \cite[Theorem 1]{KR}, for each $\la$, the absolute value $\big| M(e,m)_{\la,\eta}\big|$ is upper bounded by the number of Lyndon word sequences over an alphabet $A$ of cardinality $|A|=4$, and with lengths adding up to $|\eta|$. Since the number of Lyndon words of length $\ell$ over $A$ is upper bounded by $4^{\ell}$, then $\big| M(e,m)_{\la,\eta}\big|$ is upper bounded by $4^{|\eta|}$ times the number of compositions of $|\eta|$, which is $2^{|\eta|-1}<2^{|\eta|}$, thus, $\big| M(e,m)_{\la,\eta}\big|\le 8^{|\eta|}$. Plugging back into~\eqref{eq:bound_f} gives
    \[
    \bigg| f_{\eta}\Big(\big\{(-1)^{-k}\big\}_{k\ge 1}\Big) \bigg| \le 8^{|\eta|}\cdot p(|\eta|) \le (8e)^{|\eta|},
    \]
    where $p(m)$ denotes the number of integer partitions of $m$.
    On the other hand, one can verify from \cite[Definitions 3.6\,--\,3.8]{CD} that $0\le\nu(\MM)\le 2g(\MM)$, so $\big|(\alpha-1)^{\nu(\MM)}\big|\le \max(\alpha,1)^{2g(\MM)}$.
    Finally, as observed in the proof above, the inner sum over $\MM$ in \eqref{eq_keyexpansion} has a number of terms bounded by the number of $(2n+2g)$-constellations of size $d\le n$.
    By \cite[Theorem 3.1]{BD} and Stirling's formula, this number can be upper bounded by $n^{4n(n+g)}$.
    From the previous considerations, the sum of absolute values corresponding to the sum over $g\in\frac{1}{2}\Z_{\ge 0}$ in~\eqref{eq_keyexpansion} is upper bounded by $\sum_{g\in\frac{1}{2}\Z_{\ge 0}}{N^{2-2g}\cdot \max(\alpha,1)^{2g}\cdot (8e)^{2n+2g}\cdot n^{4n(n+g)}}$ times a constant independent of $g$.
    The last sum converges as long as $N>8e\cdot\max(\alpha,1)\cdot n^{2n}$.
    This verifies the Weierstrass M-test for the sum over $g\in\frac{1}{2}\Z_{\ge 0}$ in~\eqref{eq_keyexpansion} and shows that it converges uniformly on compact subsets.
\end{remark}

\subsection{Proof of the ``only if'' part of the main theorem}

Assume that $\{\mu_N\in\M_N\}_{N\ge 1}$ satisfies the fixed temperature LLN, meaning that
    \begin{equation}\label{assumption_LLN}
        \lim_{N\to\infty}{ \mathbb{E}_{\mu_N}\left[ \frac{1}{N^s}\prod_{j=1}^s{ p^{N}_{k_j}(a) } \right] } = \prod_{j=1}^s{m_{k_j}},
    \end{equation}
for all $s\in\Z_{\ge 1}$, $k_1,\dots,k_s\in\Z_{\ge 1}$, and a sequence of real numbers $\{m_k\}_{k\ge 1}$.
Assuming the expansions
\begin{equation}\label{eq:bessel_expansion}
\ln\Big( \E_{\mu_N}\big[ B_a(\vecx;\theta) \big] \Big) = \sum_{\la\colon|\la|\le N}{ a_\la^N p_\la(\vecx) }+O\big(\|x\|^{N+1}\big),
\end{equation}
we will prove that
\begin{equation}\label{eq:to_prove_1}
\lim_{N\to\infty}{\frac{a_{(d)}^N}{N}}\,\text{ exists and is finite, for all }d\in\Z_{\ge 1},
\end{equation}
and
\begin{equation}\label{eq:to_prove_2}
\lim_{N\to\infty}{\frac{a_\la^N}{N^{\ell(\la)}}}=0,\text{ whenever }\ell(\la)\ge 2.
\end{equation}

\smallskip

This will show that the sequence $\{\mu_N\}_{N\ge 1}$ is $\theta$-LLN-appropriate, and will conclude the proof of the ``only if'' part of the main theorem.
As before, we set $\alpha:=\theta^{-1}$.

\bigskip

\textbf{Step 1.} Let us first prove~\eqref{eq:to_prove_1}. Let $d\in\Z_{\ge 1}$ be arbitrary.
Assume that $N$ is large enough so that Theorem~\ref{thm:keyexpansion} can be applied.
By definition, $a^N_{(d)}$ is the coefficient of $p_d(\vecx)$ in the expansion of $\ln\E_{\mu_N}\big[B_a(\vecx,\theta)\big]$, but this is equal to the coefficient of $p_d(\vecx)$ in the expansion of $\E_{\mu_N}\big[\ln(B_a(\vecx,\theta))\big]$, which by Theorem~\ref{thm:keyexpansion}, equals
    \begin{multline}\label{eq:ad_expression}
       \frac{\alpha^{d-1}}{d}\sum_{g\in \frac{1}{2}\Z_{\ge 0}}\ \sum_{\MM\in C^{(\infty)}_{d,\bullet}\colon \MM\text{ normal},\, g(\MM)=g}
   (\alpha-1)^{\nu(\MM)}\cdot\E_{\mu_N}\left[\prod_{v\in V_{0}(\MM)}\frac{p_{deg(v)}(a/N)}{N}\right]\\
       \times N^{ 2-2g-\ell(\pmb{\mu_{1}(\MM)})}\cdot f_{\eta(\MM)}\left(\left\{\left(-1\right)^{-k}\right\}_{k\ge 1}\right).
    \end{multline}
   Recall that $\pmb{\mu_1}(\MM)$ is the partition with parts $\deg(v)$, $v\in V_0(\MM)$, and that $|\pmb{\mu_1}(\MM)|=|\MM|=d$, so the number of possible distinct expectation terms in the sum~\eqref{eq:ad_expression} is finite.
   Since each of these finitely many terms has a limit,
   \[
   \lim_{N\to\infty}{\E_{\mu_N}\left[\prod_{v\in V_0(\MM)} \frac{p_{deg(v)}(a/N)}{N}\right]} = \prod_{v\in V_{0}(\MM)}m_{deg(v)},
   \]
   by assumption~\eqref{assumption_LLN}, then all the expectations inside the sum~\eqref{eq:ad_expression} are of order $O(1)$; moreover, the exponent of $N$ inside the sum is at most $1$ and is attained when $g=0$ and $\ell(\pmb{\mu_1}(\MM))=1$.
   Since the genus is $g(\MM)=0$, then $\MM$ can be embedded in a sphere, so it is orientable, implying that $\nu(\MM)=0$. As a result,
   \begin{equation}\label{eq_freecumulant1}
        \lim_{N\rightarrow\infty}\frac{a^{N}_{(d)}}{N} = \frac{\alpha^{d-1}}{d}\sum_{\substack{\MM\in C^{(\infty)}_{d,\bullet}: \ \MM\ \text{normal},\\ \ell(\pmb{\mu_{1}}(\MM))=1,\ g(\MM)=0}}\ \prod_{v\in V_{0}(\MM)}{m_{deg(v)}}\cdot f_{\eta(\MM)}\left(\{(-1)^{k}\}_{k\ge 1}\right),
    \end{equation}
finishing the proof of~\eqref{eq:to_prove_1}.

\bigskip
 
\textbf{Step 2.} It remains to prove~\eqref{eq:to_prove_2}.
Let $\la$ be any partition with $\ell(\la)\ge 2$. Let $n:=|\la|$ and assume that $N$ is large enough so that Theorem~\ref{thm:keyexpansion} can be applied, i.e.~we have the expansion
\[
\ln\Big( B_a(\vecx;\theta) \Big) = \sum_{\nu:|\nu|\le n}{ b_\nu^N p_\nu(\vecx) } + O\big( \|x\|^{n+1} \big),
\]
where the coefficients $b_\nu^N$ are polynomials in $\alpha$, $N$ and the products $\prod_{v\in V_0(\MM)}{\frac{1}{N}p_{\deg(v)}(a/N)}$.
These products are of order $O(1)$, as $N\to\infty$, by the assumption~\eqref{assumption_LLN}.
Also, the analytic expansion~\eqref{eq_keyexpansion} shows that the leading asymptotic term in $\E_{\mu_N}\big[b_\nu^N\big]$ is of order $N^{2-\ell(\nu)}$ and comes from those constellations $\MM$ with genus $g(\MM)=0$.
In particular,
\begin{equation}\label{eq:def_c}
c_\nu := \lim_{N\to\infty}{\E_{\mu_N}\big[N^{\ell(\nu)-2}\cdot b_{\nu}^N\big]} \text{ exists and is finite, for all $\nu$ with $|\nu|\le n$}.
\end{equation}
More generally,
\begin{equation}\label{eq:finite_limit}
\lim_{N\to\infty}{\E_{\mu_N}\left[\prod_{i=1}^k \Big(N^{\ell(\nu^{(i)})-2}\cdot b_{\nu^{(i)}}^N\Big) \right]}
= \prod_{i=1}^k{c_{\nu^{(i)}}},
\end{equation}
for any sequence of partitions $\nu^{(1)},\dots,\nu^{(k)}$ with $|\nu^{(1)}|,\cdots,|\nu^{(k)}|\le n$.

\smallskip

Next, by expanding the exponential in the right hand side of
\begin{equation}\label{eq:exp_bs}
\ln\Big( \E_{\mu_N}\big[ B_a(\vecx;\theta) \big] \Big)
= \ln\left( \E_{\mu_N}\bigg[ e^{\ln{B_a(\vecx;\theta)}} \bigg] \right)
= \ln\left( \E_{\mu_N}\bigg[ e^{\sum_{\nu:|\nu|\le n}{ b_\nu^N p_\nu(\vecx) } + O\big( \|x\|^{n+1} \big)} \bigg] \right)
\end{equation}
and by comparing the result with~\eqref{eq:bessel_expansion}, we can find a formula for $a_\la^N$ in terms of the expected values $\Big\{\mathbb{E}\big[b_{\nu^{(1)}}^N\cdots b_{\nu^{(k)}}^N\big] \colon k\in\Z_{\ge 1},\, |\nu^{(1)}|+\dots+|\nu^{(k)}|\le n\Big\}$.
Such formula, together with~\eqref{eq:finite_limit}, will prove the desired limit~\eqref{eq:to_prove_2}.

\begin{example}
if $\la=(\la_1,\la_2)$ is a partition with $\ell(\la)=2$, then
\[
a_{(\la_1,\la_2)}^N = \E_{\mu_N}\Big[ b_{(\la_1,\la_2)}^N \Big] + \E_{\mu_N}\Big[ b_{(\la_1)}^N b_{(\la_2)}^N \Big] - \E_{\mu_N}\Big[ b_{(\la_1)}^N \Big] \cdot\E_{\mu_N}\Big[ b_{(\la_2)}^N \Big].
\]
Consequently, by~\eqref{eq:def_c}, we have
\[
N^{-2}a_{(\la_1,\la_2)}^N = O(N^{-2}) + \E_{\mu_N}\Big[ \big(N^{-1} b_{(\la_1)}^N\big) \big(N^{-1} b_{(\la_2)}^N\big) \Big] - \E_{\mu_N}\Big[ N^{-1} b_{(\la_1)}^N \Big] \cdot\E_{\mu_N}\Big[ N^{-1} b_{(\la_2)}^N \Big],
\]
and this converges to $c_1c_2 - c_1c_2=0$, as $N\to\infty$, by virtue of~\eqref{eq:finite_limit}.
\end{example}

As in the previous example, when $\ell(\la)\ge 2$, we can express $a_\la^N$ as a linear combination of products of expectations
\begin{equation}\label{eq:products}
\E_{\mu_N}\Big[ b_{\mu^{(1)}}^N\cdots b_{\mu^{(\ell_1)}}^N \Big]\cdot \E_{\mu_N}\Big[ b_{\mu^{(\ell_1+1)}}^N\cdots b_{\mu^{(\ell_1+\ell_2)}}^N \Big]\cdots
\E_{\mu_N}\Big[ b_{\mu^{(L-\ell_s+1)}}^N\dots b_{\mu^{(L)}}^N \Big],
\end{equation}
where $\mu^{(1)}\cup\cdots\cup\mu^{(L)}=\la$; note in particular that $\ell(\la)\ge L$, with equality if and only if each $\mu^{(i)}$ is a row partition.
Because of~\eqref{eq:def_c}-\eqref{eq:finite_limit}, it follows that this product is of order
\[
O\Big( N^{\sum_{i=1}^L\big[(2-\ell(\mu^{(i)}))\big]} \Big).
\]
Since $\ell(\mu^{(i)})\ge 1$, then $\sum_{i=1}^L\big[(2-\ell(\mu^{(i)}))\big]\le L\le\ell(\la)$, with equality if and only if each $\mu^{(i)}$ is a row partition.
This implies that the product~\eqref{eq:products} is of order $O\big(N^{\ell(\la)}\big)$, and if some $\mu^{(i)}$ has $\ell\big(\mu^{(i)}\big)\ge 2$, then~\eqref{eq:products} is of order $o\big(N^{\ell(\la)}\big)$.
As a result, the limit of $N^{-\ell(\la)}a_\la^N$, as $N\to\infty$, depends on $\big\{b_{(d)}^N\colon d\in\Z_{\ge 1}\big\}$, but not on $\big\{b_\nu^N\colon \ell(\nu)\ge 2\big\}$, so we can ignore the $b_\nu$'s with $\ell(\nu)\ge 2$.
As a result of this argument, together with equations~\eqref{eq:bessel_expansion} and~\eqref{eq:exp_bs}, we have that
\begin{equation}\label{eq:limit_a_proof}
\lim_{N\to\infty}{N^{-\ell(\la)} a_\la^N} = \lim_{N\to\infty}{N^{-\ell(\la)}}\big[p_\la(\vecx)\big]\ln\mathbb{E}_{\mu_N}\bigg[ e^{\sum_{d=1}^n{b_{(d)}^Np_d(\vecx)}} \bigg].
\end{equation}
The latter coefficient of $p_\la(\vecx)$ can be found by the moment-cumulant formula,
\begin{equation}\label{eq:mom_cum_proof}
\big[p_\la(\vecx)\big]\ln\mathbb{E}_{\mu_N}\bigg[ e^{\sum_{d=1}^n{b_{(d)}^Np_d(\vecx)}} \bigg] = \sum_{\pi\in\mathcal{P}(\ell(\la))}{ (-1)^{|\pi|-1}(|\pi|-1)!\cdot\prod_{B\in\pi}{\mathbb{E}_{\mu_N}\left[ \prod_{j\in B}{b^N_{(j)}} \right]} },
\end{equation}
where, on the right hand side, $\mathcal{P}(\ell(\la))$ denotes the collection of set partitions $\pi$ of $\{1,2,\dots,\ell(\la)\}$, and $|\pi|$ denotes the number of blocks of $\pi$.
Hence, by~\eqref{eq:finite_limit} applied to row partitions, and equalities~\eqref{eq:limit_a_proof}-\eqref{eq:mom_cum_proof}, we deduce the desired limit:
\begin{align*}
\lim_{N\to\infty}{N^{-\ell(\la)}a_\la^N}
= \sum_{\pi\in\mathcal{P}(\ell(\la))}{ (-1)^{|\pi|-1}(|\pi|-1)!\prod_{B\in\pi}{ \prod_{j\in B}{c_j} } }
&= \sum_{\pi\in\mathcal{P}(\ell(\la))}{ (-1)^{|\pi|-1}(|\pi|-1)!}\cdot\prod_{j=1}^{\ell(\la)}{c_j}\\
&= 0\cdot\prod_{j=1}^{\ell(\la)}{c_j}=0.
\end{align*}
We remark that the equality used above to go from the first to the second line is a well-known identity: in fact, it is the moment-cumulant formula for calculating the $\ell(\la)$-th cumulant of the delta mass $\delta_1$, since this measure has moments $m_k=1$ and classical cumulants $\kappa_\ell=\mathbf{1}_{\{\ell=1\}}$.

In conclusion, we have obtained the desired $\lim_{N\rightarrow\infty}{N^{-\ell(\la)}a^N_\la} = 0$, thus finishing the proof of~\eqref{eq:to_prove_2}, as well as \textbf{the ``only if'' part of the main theorem}.

\begin{remark}
    Eqn.~\eqref{eq_freecumulant1} furnishes the reverse formula of Eqn.~\eqref{eq_fixedtempmoment-cumulant}, in terms of constellations:
    \begin{equation}\label{eq_freecumulant2}
        \kappa_d = \sum_{\substack{\MM\in C^{(\infty)}_{d,\bullet}: \ \MM\ \text{normal},\\ \ell(\pmb{\mu_{1}}(\MM))=1,\ g(\MM)=0}}\ \prod_{v\in V_{0}(\MM)}{m_{deg(v)}}\cdot f_{\eta(\MM)}\left(\{(-1)^{k}\}_{k\ge 1}\right),\text{ for all }d\in\Z_{\ge 1}.
    \end{equation}
    Since constellations on closed orientable surfaces are in one-to-one correspondence with branched coverings of a sphere (see e.g.~\cite[Section 2]{CD}), this recovers a classical connection between free probability and geometry; see~\cite{BoGF} and the references therein.
\end{remark}

\section{Applications}\label{sec:application}

In this section, we give several applications of Theorem~\ref{thm:main}.

\subsection{$\theta$-additions and free convolution}

\begin{proof}[Proof of Theorem \ref{thm:beta_addition}]
    Let the power sum expansions of the logarithms of Bessel generating functions of $a(N)$ and $b(N)$ be
    \[
    \ln\big(G^{(1)}_{N,\theta}(\vecx)\big) = \sum_{\la\colon|\la|\le N}{a^N_\la[1]\,p_\la(\vecx)} + O\big(x^{N+1}\big),\quad \ln\big(G^{(2)}_{N,\theta}(\vecx)\big) = \sum_{\la\colon|\la|\le N}{a^N_\la[2]\,p_\la(\vecx)} + O\big(x^{N+1}\big),
    \]
    respectively.
    By the assumption of the theorem and the ``only if" part of Theorem~\ref{thm:main}, there exist two sequences of real numbers $\big\{\kappa_d^{(1)}\big\}_{d=1}^{\infty}$, $\big\{\kappa_d^{(2)}\big\}_{d=1}^{\infty}$, such that 
    \begin{align*}
        &\lim_{N\to\infty}{\frac{a_{(d)}^N[1]}{N}} = \frac{\theta^{1-d}\kappa_d^{(1)}}{d},\quad \lim_{N\to\infty}{\frac{a_{(d)}^N[2]}{N}} = \frac{\theta^{1-d}\kappa_d^{(2)}}{d},\quad d\ge 1,\\
        &\lim_{N\to\infty}{\frac{a_\la^N[1]}{N^{\ell(\la)}}} = \lim_{N\to\infty}{\frac{a_\la^N[2]}{N^{\ell(\la)}}}=0, \quad \ell(\la)\ge 2.
    \end{align*}
    Then, by the definition~\eqref{eq_thetaaddition2} of $\theta$-additions, the Bessel generating function of $a+_\theta b$ is $G^{(3)}_{N,\theta}(\vecx) = G^{(1)}_{N,\theta}(\vecx)\cdot G^{(2)}_{N,\theta}(\vecx)$, thus implying the expansion
    \[
    \ln\big(G^{(3)}_{N,\theta}(\vecx)\big) = \sum_{\la\colon|\la|\le N}{a^N_\la[3]\,p_\la(\vecx)} + O\big(x^{N+1}\big),
    \]
    where $a^N_\la[3] = a^N_\la[1] + a^N_\la[2]$, for all $\la$. Hence,
    \begin{align*}
        &\lim_{N\to\infty}{\frac{a_{(d)}^N[3]}{N}} = \frac{\theta^{1-d}\big(\kappa_d^{(1)}+\kappa_d^{(2)}\big)}{d}, \quad d\ge 1,\\
        &\lim_{N\to\infty}{\frac{a_\la^N[3]}{N^{\ell(\la)}}}=0, \quad \ell(\la)\ge 2.
    \end{align*}
    The result then follows from the ``if" part of Theorem \ref{thm:main}.
\end{proof}

\subsection{$\theta$-Dyson Brownian motion}

The Dyson Brownian motion, introduced in~\cite{Dy}, is an $N$-dimensional diffusion that can be viewed as the eigenvalues of a process $(M_{N}(t))_{t\ge 0}$ of $N\times N$ complex Hermitian matrices whose entries are i.i.d.~Brownian motions. The diffusion is generalized (from $\theta=1$) to a general $\theta>0$ by means of the following system of SDEs
\begin{equation}\label{eq_dbm}
    \d W_i(t)=\theta\sum_{j\ne i}\frac{\d t}{W_i(t)-W_j(t)} + \d B_i(t),\quad i=1,2,...,N,
\end{equation}
where $B_1(t),\dots,B_N(t)$ are $N$ independent standard Brownian motions.

\begin{definition}
    Fix $\theta>0$. Let $a=(a_1\le ...\le a_N)$ be a random $N$-tuple with exponentially decaying distribution. The $\theta$-Dyson Brownian motion ($\theta$-DBM for short) starting at $a$ is the unique strong solution $W(t) = \big(W_1(t)\le ...\le W_N(t)\big)$, $t\ge 0$, of the system of SDEs in Eqn.~\eqref{eq_dbm}, with initial condition $W(0)=a$.
\end{definition}

If the solution mentioned in the previous definition exists, then it lives in the closed Weyl chamber $\mathcal{W}_N := \{(x_1,\dots,x_N)\in\R^N \mid x_1\le x_2\le\dots\le x_N\}$.
When $\theta<\frac{1}{2}$, some issues arise from the definition above, since the particles collide after a finite time and therefore the solution of Eqn.~\eqref{eq_dbm} blows up; see~\cite{CL}.
However, $(W(t))_{t\ge 0}$ can be alternatively defined as an $N$-dimensional Markov process with a certain explicit transition kernel, and this definition is valid for all $\theta>0$; see e.g.~\cite[Eqn. (23)]{GXZ}.
The $\theta$-DBM has various connections to models in the KPZ universality class, to random geometry, and to random matrices; see e.g.~\cite{CH,DOV,Ca,EY} and references therein.

Under the trivial initial condition $a=(0^N)$, the fixed-time distribution of $W(t)$, for any given $t>0$, is known to be an $N$-dimensional Gaussian $(2\theta)$-ensemble, so in particular, the distribution of $W(t)$ is exponentially decaying.
The following statement identifies $(W(t))_{t\ge 0}$, starting at $a$, with the $\theta$-addition of $a$ and the $\theta$-DBM starting at $(0^N)$.



\begin{proposition}[Lemma~3.8 in \cite{GXZ}]\label{prop:dbmbgf}
    Let $W(t)$, $t\ge 0$, be an $N$-dimensional $\theta$-DBM starting at $a\in\mathcal{W}_N$. For any $t\ge 0$, the Bessel generating function of the distribution of $W(t)$ is equal to
    \begin{equation}
        \E\Big[B_{W(t)}(x_{1},...,x_{N};\theta)\Big] = \E\big[B_a(x_{1},...,x_{N};\theta)\big]\cdot \exp\left(\frac{t}{2}\sum_{i=1}^{N}x_{i}^{2}\right).
    \end{equation}
\end{proposition}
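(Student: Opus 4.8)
The plan is to identify the function $w\mapsto B_{w}(\vecx;\theta)$ as an eigenfunction of the infinitesimal generator of the $\theta$-DBM and then run a Feynman--Kac / martingale argument. The generator of the diffusion~\eqref{eq_dbm}, acting on symmetric $C^{2}$ functions of the positions $(w_{1},\dots,w_{N})$, is
\[
L:=\frac12\sum_{i=1}^{N}\partial_{i}^{2}+\theta\sum_{i\ne j}\frac{1}{w_{i}-w_{j}}\,\partial_{i}.
\]
A direct computation — using $\sigma_{ij}\partial_{i}f=\partial_{j}f$ for symmetric $f$ — shows that, restricted to symmetric functions, $\sum_{i}\D_{i}^{2}=\sum_{i}\partial_{i}^{2}+2\theta\sum_{i\ne j}(w_{i}-w_{j})^{-1}\partial_{i}$; this is the well-known fact that the Dunkl Laplacian restricts to the rational Calogero--Moser--Sutherland operator on Weyl-invariant functions. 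Hence $L=\tfrac12\,\P_{2}$ as operators on symmetric functions in the variables $w$.

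On the other hand, the multivariate Bessel function is symmetric under interchanging its two groups of variables — this is visible from the Jack expansion~\eqref{eq_BesselJack}, which is manifestly invariant under $\vecx\leftrightarrow a$ — so for fixed $\vecx$ the function $B_{w}(\vecx;\theta)$ is a joint symmetric eigenfunction of the Dunkl operators acting in the $w$-variables, and in particular $\P_{2}\,B_{w}(\vecx;\theta)=\big(\sum_{i}x_{i}^{2}\big)B_{w}(\vecx;\theta)$ ($\P_{2}$ acting in the displayed variables). Combining this with the previous paragraph, $\phi_{\vecx}(w):=B_{w}(\vecx;\theta)$ satisfies $L\phi_{\vecx}=\lambda_{\vecx}\,\phi_{\vecx}$ with $\lambda_{\vecx}:=\tfrac12\sum_{i}x_{i}^{2}$.

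Now fix $\vecx\in\R^{N}$ and a deterministic initial condition $a\in\mathcal{W}_{N}$, and let $(W(t))_{t\ge0}$ be the $\theta$-DBM started at $a$. Applying Itô's formula to $\phi_{\vecx}$ along $W$, the eigenfunction identity makes the finite-variation part of $e^{-\lambda_{\vecx}t}\phi_{\vecx}(W(t))$ vanish, so this process is a local martingale. To promote it to a genuine martingale one needs an integrability bound: using the estimate $|B_{w}(\vecx;\theta)|\le \exp\!\big(C_{\vecx}\max_{i}|w_{i}|\big)$ (implicit already in Proposition-Definition~\ref{def:bgf}) together with a tail bound on $\sup_{s\le t}\max_{i}|W_{i}(s)|$, one checks that $\{e^{-\lambda_{\vecx}s}\phi_{\vecx}(W(s))\}_{s\le t}$ is uniformly integrable; the tail bound itself follows from comparing the extreme particles with Brownian motions with drift, or from the explicit transition kernel of the $\theta$-DBM. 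Hence $\E_{a}[\phi_{\vecx}(W(t))]=e^{\lambda_{\vecx}t}\phi_{\vecx}(a)$, i.e.
\[
\E_{a}\big[B_{W(t)}(\vecx;\theta)\big]=\exp\!\Big(\tfrac t2\sum_{i}x_{i}^{2}\Big)\,B_{a}(\vecx;\theta).
\]
Integrating over the exponentially decaying law of $a$ (justified by Fubini and the same bounds, via~\eqref{eq:technical}), and extending from real $\vecx$ to a complex neighbourhood of $0^{N}$ by analyticity, gives the proposition.

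The main obstacle is precisely this integrability/uniform-integrability step, and it is also where the regime $\theta<\tfrac12$ — in which~\eqref{eq_dbm} is not well posed and $W(t)$ must be understood through its transition kernel — requires care. To handle all $\theta>0$ uniformly, the cleanest route avoids Itô's formula: since $(t,w)\mapsto e^{\lambda_{\vecx}t}B_{w}(\vecx;\theta)$ is annihilated by $\partial_{t}-L$ and has at most exponential growth in $w$ uniformly on compact $t$-intervals, it is the unique such solution of the Kolmogorov backward equation for the $\theta$-DBM semigroup $P_{t}$ with that initial datum, whence $P_{t}\phi_{\vecx}=e^{\lambda_{\vecx}t}\phi_{\vecx}$, and one concludes exactly as above.
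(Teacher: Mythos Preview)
The paper does not give its own proof of this proposition; it is simply quoted from \cite{GXZ}. So there is nothing in the present paper to compare your argument against directly.

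Your argument is the natural one and is correct in outline. The identification $L=\tfrac12\P_{2}$ on symmetric functions is right (your computation of $\sum_i\D_i^2$ on symmetric $f$ is accurate), the $\vecx\leftrightarrow a$ symmetry of $B_a(\vecx;\theta)$ via the Jack expansion~\eqref{eq_BesselJack} is valid, and the eigenfunction relation $L\phi_{\vecx}=\tfrac12\big(\sum_i x_i^2\big)\phi_{\vecx}$ then follows from Theorem~\ref{thm:bessel_existence} applied in the $w$-variables. From there the martingale/semigroup step is standard once the integrability is in hand.

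The only part that remains a sketch is precisely what you flag: upgrading the local martingale to a true martingale (equivalently, justifying $P_t\phi_{\vecx}=e^{\lambda_{\vecx}t}\phi_{\vecx}$ against the exponential growth of $\phi_{\vecx}$), particularly for $\theta<\tfrac12$ where the SDE formulation is unavailable and one must work through the transition kernel. Your proposed routes --- comparison of the extreme particles with drifted Brownian motions, or appealing to uniqueness for the backward equation in a class of exponentially growing functions --- are the right ideas, but they are asserted rather than carried out here. That technical content is what \cite{GXZ} supplies.
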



\begin{theorem}\label{thm:beta_dyson}
   Let $\theta>0$ be fixed and let $a(N) = \big(a_1(N)\le ...\le a_N(N)\big)$, $N=1,2,\dots$, be a sequence of random $N$-tuples in $\mathcal{W}_N$ with exponentially decaying distributions. Assume that there exists a probability measure $\mu$ with compact support such that
    \[
    \lim_{N\rightarrow\infty}\mu[a(N)]\stackrel{m}{=}\mu,
    \]
    where $\mu[a(N)]$ is defined in Eqn.~\eqref{eq:empirical_measure}.
     Let $\big(W_N(t)\big)_{t\ge 0}$ be the $\theta$-DBM starting at $a(N)$. Then for any fixed $T>0$, we have
    \[
    \lim_{N\rightarrow\infty}\mu\Big[W_N(\theta^{-1} TN)\Big]\stackrel{m}{=}\mu\boxplus\mu_{sc}^{(T)},
    \]
    weakly, in probability, where $\mu_{sc}^{(T)}$ is the semicircle law with density
    \[
    \frac{\d\mu_{sc}^{(T)}}{\d x}(x) = \mathbf{1}_{\big[ -2\sqrt{T}, 2\sqrt{T} \big]}(x)\cdot\frac{1}{2\pi T}\sqrt{4T-x^{2}}.
    \]
\end{theorem}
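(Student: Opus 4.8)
The plan is to reduce Theorem~\ref{thm:beta_dyson} to Theorem~\ref{thm:beta_addition} via the scaling structure of the $\theta$-DBM. First I would recall from Proposition~\ref{prop:dbmbgf} that the Bessel generating function of $W_N(t)$ factors as $\E[B_{a(N)}(\vecx;\theta)]\cdot\exp\big(\tfrac{t}{2}\sum_i x_i^2\big)$. Taking $t = \theta^{-1}TN$, the second factor is $\exp\big(\tfrac{\theta^{-1}TN}{2}\sum_i x_i^2\big) = \exp\big(\tfrac{\theta^{-1}TN}{2}\,p_2(\vecx)\big)$, which is precisely the Bessel generating function of a random $N$-tuple $g(N)$ — an $N$-dimensional Gaussian $(2\theta)$-ensemble of appropriate variance (as noted in the text, the trivial-initial-condition $\theta$-DBM has exactly this exponentially decaying distribution). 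So $W_N(\theta^{-1}TN) \stackrel{d}{=} a(N) +_\theta g(N)$ by the characterization~\eqref{eq_thetaaddition2} of $\theta$-addition, with $a(N)$ and $g(N)$ independent.

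Next I would verify the hypotheses of Theorem~\ref{thm:beta_addition} for the two sequences. For $a(N)$: the assumption $\lim_N \mu[a(N)] \stackrel{m}{=} \mu$ with $\mu$ compactly supported means exactly that $\lim_N \tfrac{a(N)}{N} \stackrel{m}{=} \{m_k^a\}$, where $m_k^a = \int x^k\,\d\mu(x)$ are the moments of $\mu$. For $g(N)$: I need to compute $\lim_N \tfrac{g(N)}{N} \stackrel{m}{=} \{m_k^g\}$. Using the ``only if'' direction of Theorem~\ref{thm:main}, since $\ln G_{N,\theta}^{(g)}(\vecx) = \tfrac{\theta^{-1}TN}{2}p_2(\vecx)$, the only nonzero coefficient is $a_{(2)}^N = \tfrac{\theta^{-1}TN}{2}$, giving $\lim_N \tfrac{a_{(2)}^N}{N} = \tfrac{\theta^{-1}T}{2} = \tfrac{\theta^{1-2}\kappa_2^g}{2}$, hence $\kappa_2^g = T$ and $\kappa_d^g = 0$ for $d\ne 2$; all higher-length coefficients vanish identically so condition (b) holds trivially. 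The free cumulant sequence $(0,T,0,0,\dots)$ is exactly that of the semicircle law $\mu_{sc}^{(T)}$ with density $\tfrac{1}{2\pi T}\sqrt{4T-x^2}$ on $[-2\sqrt T, 2\sqrt T]$ (this is the standard computation via the $R$-transform $R(z) = Tz$; alternatively one checks the moments $m_{2k}^g = T^k C_k$ against the Lukasiewicz-path formula~\eqref{eq_fixedtempmoment-cumulant}, which for cumulants supported on step size $(1,1)$ counts Dyck paths). Thus $\lim_N \tfrac{g(N)}{N} \stackrel{m}{=} \{m_k^g\}$ with $\{m_k^g\}$ the moments of $\mu_{sc}^{(T)}$.

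Then Theorem~\ref{thm:beta_addition} applies directly: since $a(N)$ and $g(N)$ are independent exponentially decaying sequences with moment limits given by $\mu$ and $\mu_{sc}^{(T)}$ respectively, we get
\[
\lim_{N\to\infty}\frac{a(N) +_\theta g(N)}{N} \stackrel{m}{=} \{m_k^a\} \boxplus \{m_k^g\}.
\]
Because $\mu$ is compactly supported and $\mu_{sc}^{(T)}$ is compactly supported, the free convolution $\mu\boxplus\mu_{sc}^{(T)}$ is a compactly supported probability measure, hence uniquely determined by its moments; so the moment convergence $\lim_N \mu[W_N(\theta^{-1}TN)] \stackrel{m}{=} \mu\boxplus\mu_{sc}^{(T)}$ upgrades, as remarked after Theorem~\ref{thm:intro}, to weak convergence in probability of the empirical measures. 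This gives the claimed statement.

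I expect the main obstacle to be purely bookkeeping rather than conceptual: one must be careful that the variance normalization of the Gaussian ensemble $g(N)$ is tracked correctly through the factor $\exp\big(\tfrac{\theta^{-1}TN}{2}p_2(\vecx)\big)$ so that the free cumulant $\kappa_2^g$ comes out to exactly $T$ (and not $T/\theta$ or $2T$), and that the identification of $(0,T,0,\dots)$ with the semicircle law of the stated density is the correct one — a factor-of-two error here would misstate the support. Everything else is a direct concatenation of the ``only if'' part of Theorem~\ref{thm:main}, the definition of $\theta$-addition, Proposition~\ref{prop:dbmbgf}, and the ``if'' part packaged inside Theorem~\ref{thm:beta_addition}. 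I would also remark, for completeness, that the rescaling $t = \theta^{-1}TN$ is the natural one: it is exactly what makes the Brownian part contribute at the same scale $N$ as the initial condition after dividing positions by $N$, matching the $\theta$-dependence $\tfrac{\theta^{1-d}\kappa_d}{d}$ built into Definition~\ref{def:appropriate}.
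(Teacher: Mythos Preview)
Your proposal is correct and follows essentially the same approach as the paper's proof, which simply notes that $\mu_{sc}^{(T)}$ has free cumulants $\kappa_n = T\cdot\mathbf{1}_{\{n=2\}}$ and then invokes Theorem~\ref{thm:main} together with Proposition~\ref{prop:dbmbgf}. The only cosmetic difference is that you route through Theorem~\ref{thm:beta_addition} (by explicitly identifying $W_N(\theta^{-1}TN)$ with $a(N)+_\theta g(N)$), whereas the paper applies Theorem~\ref{thm:main} directly to the factored Bessel generating function; since Theorem~\ref{thm:beta_addition} is itself proved by exactly that argument, the two are the same proof at different levels of packaging.
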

\begin{proof}
    A simple calculation shows that $\mu_{sc}^{(T)}$ has free cumulants $\kappa_n=T\cdot\mathbf{1}_{\{n=2\}}$, for all $n\ge 1$. Then the result is an immediate consequence of Theorem~\ref{thm:main} and Proposition~\ref{prop:dbmbgf}.
\end{proof}

\subsection{$\theta$-corners and free projection}

\begin{proposition}\label{prop:bgfcorners}
    Given a random $N$-tuple $a=(a_1<\dots<a_N)$ and integer $1\le M<N$, we have
\[
\E\left[B_{corner_{M}^{N}(a)}(x_{1},\dots,x_{M};\theta)\right] = \E\Big[B_a\big(x_1,\dots,x_M,0^{N-M};\theta\big)\Big].
\]
\end{proposition}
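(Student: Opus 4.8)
The plan is to derive the identity from the integral representation~\eqref{eq_besselbranching} of the multivariate Bessel function by specializing the last $N-M$ variables to zero. Fix a deterministic $a=(a_1<\dots<a_N)$ and put $x_{M+1}=\dots=x_N=0$ in~\eqref{eq_besselbranching}. In the exponent, the coefficient of $\big(\sum_{i=1}^{k}y^{(k)}_i-\sum_{j=1}^{k-1}y^{(k-1)}_j\big)$ is exactly $x_k$, so every summand with $k>M$ drops out and the exponent collapses to $\sum_{k=1}^{M}x_k\big(\sum_{i=1}^{k}y^{(k)}_i-\sum_{j=1}^{k-1}y^{(k-1)}_j\big)$, a function of the lower rows $y^{(1)},\dots,y^{(M)}$ of the interlacing array only.

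Next I would apply Fubini to the $\frac{N(N-1)}{2}$-dimensional integral, integrating out the top rows $y^{(M)}\prec y^{(M+1)}\prec\cdots\prec y^{(N-1)}\prec a$ in two stages. Integrating out $y^{(M+1)},\dots,y^{(N-1)}$ first only touches the density $\Lambda_\theta(\vec{y};a)$, and by the definition of $\corner^N_M(a)$ it produces exactly the marginal density of $y^{(M)}$, i.e.\ the law of $\corner^N_M(a)$ conditionally on $a$. For the remaining integral over $y^{(1)}\prec\cdots\prec y^{(M-1)}\prec y^{(M)}$ I would use the product structure of~\eqref{eq_thetacorners}: the factors of $\Lambda_\theta(\vec{y};a)$ supported on the rows $y^{(1)},\dots,y^{(M)}$ coincide, up to the factor $\prod_{1\le i<j\le M}|y^{(M)}_i-y^{(M)}_j|^{2-2\theta}$ and the normalizing constant $Z_{M,\theta,y^{(M)}}$ (both depending only on $y^{(M)}$), with the density $\Lambda_\theta\big(y^{(1)},\dots,y^{(M-1)};y^{(M)}\big)$ of the $M$-level $\theta$-corners process with top row $y^{(M)}$. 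Hence this inner integral equals that $y^{(M)}$-dependent constant times $B_{y^{(M)}}(x_1,\dots,x_M;\theta)$, by another application of~\eqref{eq_besselbranching} with $N$ replaced by $M$. Reassembling and cancelling the constant against $Z_{N,\theta,a}$ gives $B_a(x_1,\dots,x_M,0^{N-M};\theta)=\E\big[B_{\corner^N_M(a)}(x_1,\dots,x_M;\theta)\,\big|\,a\big]$ for every deterministic $a$ (the interlacing constraints in $\Lambda_\theta$ are a.e.\ strict, so the representation~\eqref{eq_besselbranching} applies to $y^{(M)}$ for a.e.\ configuration).

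Finally I would take the expectation over the random $a$ on both sides; the interchange of this expectation with the integrals defining the Bessel functions is licensed by the exponential-decay hypothesis on the distribution of $a$ (cf.\ Proposition-Definition~\ref{def:bgf}), which ensures absolute convergence near $(0^M)$. This yields the claimed equality $\E[B_{\corner^N_M(a)}(x_1,\dots,x_M;\theta)]=\E[B_a(x_1,\dots,x_M,0^{N-M};\theta)]$. I expect the only genuinely technical point to be the bookkeeping of normalization constants in the factorization of $\Lambda_\theta(\vec{y};a)$ — equivalently, the consistency (Markov) property that conditioning the $\theta$-corners array on its $M$-th row returns the $M$-level $\theta$-corners process — but this is transparent from the fact that~\eqref{eq_thetacorners} is a product of factors each supported on a pair of consecutive rows, so no serious obstacle is anticipated.
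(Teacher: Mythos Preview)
Your proposal is correct and follows exactly the approach the paper has in mind: the paper's entire proof is the one line ``This is an immediate consequence of the integral representation in Eqn.~\eqref{eq_besselbranching},'' and your write-up simply unpacks that consequence (specialize $x_{M+1}=\dots=x_N=0$, integrate out the upper rows to obtain the $\corner^N_M(a)$ marginal, and recognize the remaining integral as the $M$-variable Bessel function via the Markov/product structure of $\Lambda_\theta$). The bookkeeping of normalization constants you flag is indeed automatic once one phrases everything in terms of probability densities rather than unnormalized weights, so no gap remains.
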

\begin{proof}
    This is an immediate consequence of the integral representation in Eqn.~\eqref{eq_besselbranching}.
\end{proof}

\begin{proof}[Proof of Theorem \ref{thm:beta_projection}]
     Denote the Bessel generating functions of $a(N)$ and $corner^N_M(a)$, respectively, by
     \[
     G_{N,\theta}(\vecx) = \sum_{\la\colon|\la|\le N}a^N_\la p_\la(\vecx)+O(\|x\|^{N+1}),\qquad
     G^{(N)}_{M,\theta}(\vecx)=\sum_{\la\colon|\la|\le M} a^N_\la[M]\,p_\la(\vecx)+O(\|x\|^{M+1}).
     \]
     By the ``only if'' part of Theorem \ref{thm:main}, there exist real numbers $\kappa_1,\kappa_2,\dots$ such that
    \begin{equation*}
    \lim_{N\to\infty}{\frac{a_{(d)}^N}{N}} = \frac{\theta^{1-d}\kappa_d}{d},\quad\text{for all }d\ge 1,\qquad\quad
    \lim_{N\to\infty}{\frac{a_\la^N}{N^{\ell(\la)}}}=0,\quad\text{if }\ell(\la)\ge 2.
    \end{equation*}
    Then by definition of $\theta$-corners, we have $a^N_\la[M] = a^N_\la$, for all $\la$ such that $|\la|\le M$.
    Then, if $\alpha\in (0,1)$ and $M = \lfloor \alpha N\rfloor$, we have
    \begin{equation*}
        \lim_{N\to\infty}{\frac{a_{(d)}^N[M]}{M}} = \frac{\theta^{1-d}\kappa_d}{\alpha d},\quad\text{for all } d\ge 1,\qquad\quad
        \lim_{N\to\infty}{\frac{a_\la^N[M]}{M^{\ell(\la)}}}=0,\quad\text{if }\ell(\la)\ge 2.
    \end{equation*}
    Next, by the ``if'' part of Theorem~\ref{thm:main}, we have
    $$\lim_{N\rightarrow\infty}corner^{N}_{\lfloor \alpha N\rfloor}(a)\stackrel{m}{=}\big\{m_k^{(\alpha)}\big\}_{k=1}^{\infty},$$
    where the quantities $\big\{m_k^{(\alpha)}\big\}_{k=1}^{\infty}$ are derived from the sequence $\{\frac{1}{\alpha}\kappa_d\}_{d=1}^{\infty}$ by means of Eqn.~\eqref{eq_fixedtempmoment-cumulant}.
    In particular, Eqn.~\eqref{eq_fixedtempmoment-cumulant} shows that $\big|m_k^{(\alpha)}\big|\le\alpha^{-k}\cdot|m_k|$, for all $k\ge 1$. As a result, the sequence $\big\{m_k^{(\alpha)}\big\}_{k=1}^\infty$ satisfies the Carleman's condition for the moment problem (see e.g \cite{Sc}), and therefore  uniquely determines the limiting measure. 
\end{proof}

\begin{remark}
    The convergence in the sense of moments stated in Theorems \ref{thm:beta_projection} and \ref{thm:beta_dyson} also holds in the sense of weak convergence, in probability. This is because with certain tightness conditions that hold for the empirical measures in these two theorems, the two types of convergence are equivalent.
\end{remark}

\end{document}